\newtheorem{theo}{\bf Theorem}[section]  \newtheorem{lemma}[theo]{\bf Lemma}
\newtheorem{conj}[theo]{\bf Conjecture} \newtheorem{defi}[theo]{\bf
Definition} \newtheorem{coro}[theo]{\bf Corollary}
 \theoremstyle{remark}
\newtheorem{remark}[theo]{\bf Remark}
 \newcommand{\mbf}{\mathbf}
 \newcommand{\Z}{{\mathbb Z}}
\newcommand{\N}{{\mathbb N}} 
\newcommand{\la}{\leftarrow} \newcommand{\ra}{\rightarrow}
\newcommand{\exA} {
	\begin{tikzpicture} \draw(0.5,2.5)--(10.5,2.5);
\node[circle,draw=black] at (1,2){$2$}; \node[circle,draw=black] at
(2,2){$1$}; \node[circle,draw=black] at (3,2){$3$}; \node[circle] at
(4,2){$\infty$}; \node[circle,draw=black] at (5,2){$2$};
\node[circle,draw=black] at (6,2){$4$}; \node[circle,draw=black] at
(7,2){$3$}; \node[circle,draw=black] at (8,2){$4$}; \node[circle] at
(9,2){$\infty$}; \node[circle,draw=black] at (10,2){$1$};
                \node at (1,3){$2$}; \node at (2,3){$1$};
\node at (3,3){$\infty$}; \node at (4,3){$2$};
\node at (5,3){$\infty$}; \node at (6,3){$\infty$};
\node at (7,3){$3$}; \node at (8,3){$\infty$};
\node at (9,3){$3$}; \node at (10,3){$1$};
	\end{tikzpicture} }
\newcommand{\exB} {
	\begin{tikzpicture} \draw(0.5,2.5)--(10.5,2.5); \node[circle]
at (1,2){$\infty$}; \node[circle,draw=black] at (2,2){$1$};
\node[circle,draw=black] at (3,2){$2$}; \node[circle] at
(4,2){$\infty$}; \node[circle,draw=black] at (5,2){$2$};
\node[circle,draw=black] at (6,2){$3$}; \node[circle,draw=black] at
(7,2){$3$}; \node[circle,draw=black] at (8,2){$4$}; \node[circle] at
(9,2){$\infty$}; \node[circle,draw=black] at(10,2){$1$};
 \node at (1,3){$2$}; \node at (2,3){$1$};
\node at (3,3){$\infty$}; \node at (4,3){$2$};
\node at (5,3){$\infty$}; \node at (6,3){$\infty$};
\node at (7,3){$3$}; \node at (8,3){$\infty$};
\node at (9,3){$3$}; \node at (10,3){$1$};
	\end{tikzpicture} }
\newcommand{\exC} { 
	\begin{tikzpicture} \draw(0.5,1.5)--(9.5,1.5);

		\node at (1,2){$2$};
\node at (2,2){$1$}; \node at
(3,2){$4$}; \node at (4,2){$3$};
\node at (5,2){$4$}; \node at
(6,2){$4$}; \node at (7,2){$1$};
\node at (8,2){$1$}; \node at
(9,2){$2$};

   	\node[circle,draw=black] at (1,1){$2$}; \node[circle] at
(2,1){{\color{black}$1$}}; \node[circle,draw=black] at (3,1){$1$};
\node[circle,draw=black] at (4,1){$2$}; \node[circle,draw=black] at
(5,1){$3$}; \node[circle,draw=black] at (6,1){$4$};
\node[circle,draw=black] at (7,1){$1$}; \node[circle] at
(8,1){{\color{black}$1$}}; \node[circle,draw=black] at (9,1){$1$};
    
	\end{tikzpicture} } \newcommand{\exD} { 
	\begin{tikzpicture} \draw(0.5,1.5)--(7.5,1.5);
\draw(0.5,2.5)--(7.5,2.5);

		\node[circle] at (1,3){${\color{black}1}$};
\node[circle] at (2,3){${\color{black}1}$}; \node[circle] at
(3,3){${\color{black}1}$}; \node[circle] at (5,3){${\color{black}1}$};
\node[circle] at (6,3){${\color{black}1}$};

		\node[circle,draw=black] at (4,3){$1$};
\node[circle,draw=black] at (7,3){$1$};
		
   	\node[circle,draw=black] at (2,2){$1$}; \node[circle,draw=black]
at (3,2){$2$}; \node[circle,draw=black] at (4,2){$1$};

		\node[circle] at (5,2){${\color{black}2}$};
\node[circle] at (6,2){${\color{black}2}$}; \node[circle] at
(1,2){${\color{black}1}$}; \node[circle] at (7,2){${\color{black}1}$};

		\node[circle,draw=black] at (1,1){$3$};
\node[circle,draw=black] at (3,1){$1$}; \node[circle,draw=black] at
(5,1){$1$}; \node[circle,draw=black] at (7,1){$2$};

		\node[circle] at (2,1){${\color{black}1}$};
\node[circle] at (4,1){${\color{black}1}$}; \node[circle] at
(6,1){${\color{black}2}$};
	
	\end{tikzpicture} }
\newcommand{\exE} {
	\begin{tikzpicture} \draw (3,8) node {321}; \draw (0,6) node
{231}; \draw (6,6) node {312}; \draw (0,2) node {213}; \draw (6,2)
node {132}; \draw (3,0) node {123};

	\draw node at (1.5,7.5){$1/t_2$}; \draw node at
(4.6,7.3){$1/t_1$}; \draw node at (3.4,6.5){$1/t_1$};

	\draw node at (-0.5,4.5){$1/t_1$}; \draw node at
(6.6,4.5){$1/t_1$};

	\draw node at (1.5,4.5){$1/t_1$}; \draw node at
(4.6,4.5){$1/t_2$};

	\draw node at (1.5,1.3){$1/t_1$}; \draw node at
(4.3,1.3){$1/t_2$};

	\draw [->, >=stealth, thick] (2.7,7.7) -- (0.3,6.3);
\draw [->, >=stealth, thick] (3.3,7.7) --
(5.7,6.3); \draw [->, >=stealth, thick] (0,5.7) -- (0,2.3);
\draw [<-, >=stealth, thick] (2.7,0.3) --
(0.3,1.7); \draw [<-, >=stealth, thick] (3.3,0.3) -- (5.7,1.7);
\draw [<-, >=stealth, thick] (6,2.3) --
(6,5.7); \draw [->, >=stealth, thick] (3,0.3) -- (3,7.7);
\draw [->, >=stealth, thick] (5.7,2.3) --
(0.3,5.7); \draw [->, >=stealth, thick] (0.3,2.3) -- (5.7,5.7);
\draw (4.1,7.9) node
[color=blue] {$t_1 / Z$}; \draw (-0.7,6.7) node [color=blue] {$(t_1+t_2)
/ Z$}; \draw (6.7,6.7) node [color=blue] {$(t_1+t_2) / Z$}; \draw
(-0.7,1.3) node [color=blue] {$t_1$ / Z}; \draw (6.7,1.3) node
[color=blue] {$t_1 / Z$}; \draw (4.5,-0.1) node [color=blue] {$(t_1+t_2)
/ Z$};
\end{tikzpicture} }
\begin{document}

\title{A product formula for the TASEP on a ring} \author{Erik Aas
\and Jonas Sj{\"o}strand} \address{Department of Mathematics, Royal
Institute of Technology \\ SE-100 44 Stockholm, Sweden}
\email{eaas@kth.se} \email{jonass@kth.se} \date{Sept 2013}
\keywords{TASEP, exclusion process, multi-line queue, Markov chain}
\subjclass[2010]{60C05, 82B20}

\begin{abstract}
For a random permutation sampled from the stationary distribution
of the TASEP on a ring,
we show that, conditioned on the event that the first entries
are strictly larger than the last entries, the \emph{order}
of the first entries is independent of the \emph{order} of the
last entries. The proof uses multi-line queues as defined by
Ferrari and Martin, and the theorem has an enumerative combinatorial
interpretation in that setting.

Finally, we present a conjecture for the case where the small and large
entries are not separated.
\end{abstract}

\maketitle

\section{Introduction}
Exclusion processes are Markov chains that can be defined for any graph; see
\cite{liggett} for an overview. The
vertices of the graph are called \emph{sites}, and a state in the
process is a distribution of particles into the sites with \emph{at
most one particle} at each site---this is what the word ``exclusion''
refers to. The transitions in the Markov chain are defined by letting
the particles jump or swap randomly according to some rules. The most
studied case is probably the \emph{totally asymmetric simple exclusion process
(TASEP)} on $\Z$, the set of integers
thought of as an infinite row of sites.  Each particle goes one step
to the left with rate one, but only if that site is empty, otherwise
nothing happens.  The ``totally asymmetric'' part refers to particles
going left but never right.

Recently, several authors (\cite{ferrari-martin}, \cite{arita-mallick},
\cite{ayyer-linusson}, \cite{lam-williams}) have examined the TASEP on
a ring $\Z/n\Z$, where furthermore particles have different \emph{sizes}
(or \emph{classes}). 
This is sometimes called the \emph{multi-type TASEP}. Lam
\cite{lam} gave a nice connection between this TASEP and the limit
shape of large random $n$-core partitions.

The TASEP on the ring $\Z/n\Z$ can be described as a continuous-time
Markov chain defined on permutations of a finite multiset of positive integers.
Given a permutation $w$ written as a word $w=w_1w_2\dotsm w_n$, each
entry $w_j$ carries an exponential clock that rings with rate
$x_{w_j}$.  When it rings, the entry trades place with its left
neighbour (cyclically) if that entry is larger than $w_j$.  Otherwise,
nothing happens.

For a random permutation $w$ sampled from the stationary distribution,
we show that, conditioned on the event that the first $k$ entries of
$w$ are strictly larger than the last $n-k$ entries, the \emph{order}
of the first $k$ entries is independent of the \emph{order} of the
last $n-k$ entries. We also generalize this result to three or more
groups of adjacent entries.
The proof is based on a connection between the
TASEP and multi-line queues which was found by Ferrari and
Martin~\cite{ferrari-martin}.

The paper is organized as follows. After introducing some
notation and presenting our results, in Section~\ref{se:ferrari}
we describe the multi-line queues of Ferrari and Martin and their
connection to TASEPs. In the two
succeding sections we prove two recurrence relations
that will be the main
ingredients in our proof. Section~\ref{sec:proof} contains
the proof of our main result and in Section~\ref{sec:explicit} we
give some explicit formulas for the coefficients in
the recurrence relations.
Finally, in Section~\ref{sec:future} we suggest some future research
and offer a conjecture for the case where the small and large entries
in the word are not separated.

\section{Notation and results}
Fix a positive integer $n$.  We think of
the $n$-ring $\Z/n\Z$ as a row of $n$ \emph{sites}, indexed by
$1,2,\dotsc,n$ from the left, where the left and right edges of the
row are glued together.  The states in the Markov chain are identified
with words $w$ in $\{1, 2, \dots, \infty\}^n$.  Let $m_i$ be the
number of times $i$ occurs in $w$.  We will consider only words $w$
such that $m_1, \dots, m_r > 0$ for some
$r$ and $m_{i}= 0$ for $r<i<\infty$. 
Such a sequence $m_1,\dots,m_r$ will be called a \emph{type}
and we say that $\mbf{m} = (m_1,\dots,m_r)$ is the \emph{type of $w$}.
Note that the type determines $m_\infty$, as the length $n$ is fixed.
The finite entries in $w$ should be interpreted as particles of the
corresponding \emph{size} and the infinite entries
should be interpreted as \emph{empty} sites.  Let $\Omega_{\mbf m}$
denote the set of words of type $\mbf m$ (and length $n$ that
is to be understood from the context).

\begin{defi} Let $\mbf m=(m_1,\dotsc,m_r)$ be a type and let
$t_1,\dotsc,t_r$ be positive real numbers.  The \emph{totally
asymmetric exclusion process}, or \emph{TASEP}, on $\Omega_{\mbf m}$
with inverse rates $t_1,\dotsc,t_r$ is defined as follows.

Each finite entry $i$ of $u\in\Omega_{\mbf m}$ carries an exponential
clock that rings with rate $1/t_i$.  When the clock rings, the entry
trades places with its left neighbour if that entry is larger.
\end{defi}
See Figure \ref{fi:omega11} for an example.
For a generic choice of
the $t_i$, this chain has a unique stationary distribution which we
denote by $\pi$.  We will for the most part think of the $t_i$'s as
indeterminates and of the value $\pi(w)$ (for words $w$) as a rational
function of these indeterminates.  Sometimes it is useful to think of
the $\pi(w)$ as actual probabilities.

Of course, we could also let the infinite entries carry
clocks---their jumps would never be successful.  Although this would be a more
uniform definition, it has some drawbacks that make us opt for the
definition above.

Finally, note that the stationary distribution is invariant with
respect to cyclic shifts; for example, we have $\pi(123) = \pi(312) =
\pi(231)$.

\begin{figure}
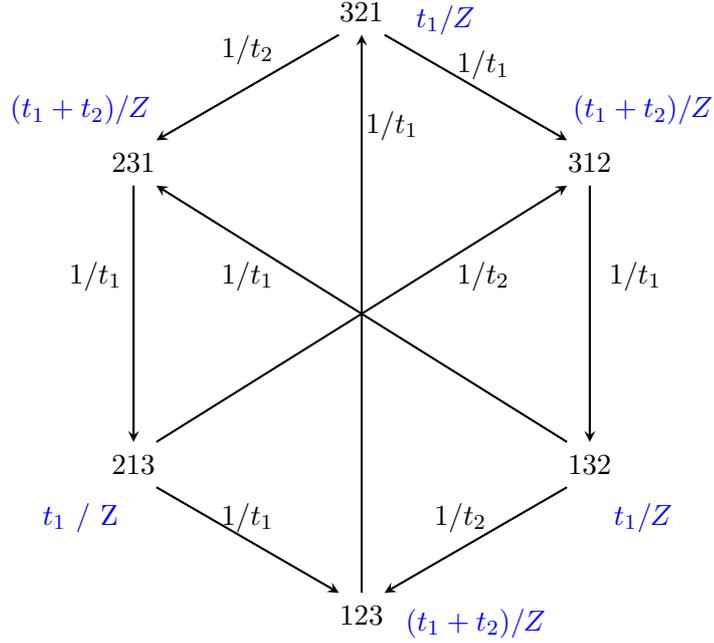

\exE
\caption { The TASEP on $\Omega_{(1,1,1)}$ with $n=3$.
Here $Z = 6t_1+3t_2$.  The stationary distribution $\pi$ is indicated at
each state.  On each arrow is written the corresponding transition
rate.}\label{fi:omega11} 
\end{figure}

A word $w=w_1w_2\dots w_n$ is said to be \emph{decomposable} if it can
be written as a concatenation $w=uv$ of words $u=w_1w_2\dots w_k$ and
$v=w_{k+1}w_{k+2}\dots w_n$ where all entries of $u$ are strictly
larger than all entries of $v$.  (Of course, an infinite entry
$\infty$ is considered to be larger than any finite entry.)

We can now state our main theorem.

\begin{theo}\label{th:main} Let $uv$ be a decomposable word and let
$v'$ be a word obtained from $v$ by permuting the entries.  Then,
\[ \frac{\pi(uv)}{\pi(uv')}=\frac{\pi(\infty v)}{\pi(\infty v')}
\] as rational expressions in the $t_i$'s.
(Here, $\infty$ denotes the word $\infty\dotsb\infty$ of the same length
as $u$.)
\end{theo}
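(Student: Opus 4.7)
The plan is to work within the Ferrari--Martin multi-line queue (MLQ) formalism, to be recalled in Section~\ref{se:ferrari}, and to argue by induction on the length of $u$, using the two recurrence relations to be established in the two succeeding sections. In the MLQ picture, $\pi(w)$ is a weighted sum $\sum_Q \operatorname{wt}(Q)$ over multi-line queues that project to $w$, with $\operatorname{wt}(Q)$ a monomial in $t_1,\dotsc,t_r$. The decomposability of $uv$ forces the upper rows of $Q$ (those eventually producing the large entries of $u$) to occupy exactly the first $k$ positions, which hints at a partial decoupling between an \emph{upper part} of $Q$ depending on $u$ and a \emph{lower part} depending on $v$; this structural fact should ultimately be what makes the theorem true.

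The base case of the induction is $u$ empty, where the claim is a tautology. For the inductive step, I would apply whichever of the two recurrences is appropriate to rewrite $\pi(uv)$ as a linear combination $\sum_j c_j\,\pi(\hat u_j\, v)$ in which each $\hat u_j$ is strictly shorter than $u$ and $\hat u_j v$ is still decomposable. The crucial property of the coefficients $c_j$ must be that they depend only on $u$ and on the \emph{multiset} (not the ordering) of the entries of $v$. Applying the same recurrence to $\pi(uv')$, to $\pi(\infty v)$, and to $\pi(\infty v')$ then produces linear combinations with identical coefficients, and the inductive hypothesis converts each ratio $\pi(\hat u_j\,v)/\pi(\hat u_j\,v')$ into $\pi(\hat{\infty}_j\,v)/\pi(\hat{\infty}_j\,v')$. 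Combining these equalities yields the desired identity $\pi(uv)/\pi(uv')=\pi(\infty v)/\pi(\infty v')$.

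The main obstacle will be proving the two recurrences with the required order-insensitivity of their coefficients. This invariance is the combinatorial essence of the theorem: in general $\pi(w)$ depends very delicately on $w$, but the decomposability hypothesis hides a symmetry that the MLQ model should make visible. I would expect each recurrence to be proved by a weight-preserving surgery on MLQs that modifies only the upper rows, those above the level where the small entries of $v$ are produced, so that the operation is automatically blind to the order of $v$. The reason for needing two recurrences rather than one is presumably that a single such surgery cannot peel off every possible leading entry of $u$ uniformly; the two together must cover enough cases that iterating them can reduce $u$ all the way to the empty word.
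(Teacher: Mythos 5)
Your high-level strategy---rewrite $[uv]$ as a $v$-independent linear combination of brackets of simpler words, iterate, and divide out---is exactly the shape of the paper's argument, and your observation that the whole content lies in the $v$-independence of the coefficients is correct. But your reduction scheme has a genuine gap: you propose induction on the \emph{length} of $u$, with each $\hat u_j$ ``strictly shorter than $u$'' and base case ``$u$ empty.'' No such recurrence is available. The TASEP and the multi-line queues live on a ring of fixed size $n$; the brackets $[\,\cdot\,]$ and the measure $\pi$ are only defined for words of that length, and there is no relation in the paper (nor an obvious one elsewhere) connecting stationary probabilities on rings of different sizes. Both of the paper's recurrences (Lemmas~\ref{lm:ET} and~\ref{lm:JT}) keep the length fixed: one replaces the maximal particles of $u$ by empty sites (a lumping of the top particle class, Lemma~\ref{lm:prob}, not an MLQ surgery at all), the other inverts a jump operation coming from toggling a single terminal site in the \emph{bottom} queue (Lemma~\ref{lm:bijection}), not the upper rows. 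The induction is therefore not on length but on a bespoke well-order (``simpler than'') on words of fixed length, which decreases either the type or the position of the first empty site, and whose unique minimum among the relevant words is $\infty\dotsb\infty\, v$ --- which is precisely why the theorem's right-hand side is $\pi(\infty v)/\pi(\infty v')$ with $\infty$-padding of the same length as $u$, rather than $\pi(v)/\pi(v')$. Your base case ``$u$ empty'' silently conflates these two.

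Two further points. First, the $v$-independence of the coefficients in the jump recurrence is not automatic: Lemma~\ref{lm:bijection}(b) only yields the clean weight relation $[\mbf q]=t_k[\mbf p]$ under the hypothesis that, going right from the toggled site, the first maximal particle precedes any further empty site; when that fails one must fall back on the lumping recurrence, and a small argument is needed to check that for every $u$ (other than the all-$\infty$ word) exactly one of the two lemmas applies and that decomposability is preserved. Your proposal anticipates that two recurrences are needed but not why, and the reason is this hypothesis, not an inability to ``peel off every possible leading entry.'' Second, your structural claim that decomposability forces the upper rows of the MLQ to occupy the first $k$ positions is not established in the paper and is not used; the decoupling between $u$ and $v$ emerges only through the recurrences, whose right-hand sides stay within the class of decomposable words $u'v$ with $v$ untouched.
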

We postpone the proof until Section~\ref{sec:proof} and turn our interest
to some corollaries for now.

Since the right hand side of the equation above is independent of $u$
(but dependent of the length of $u$), the following weaker but
symmetric variant of our main result follows.
\begin{coro}\label{co:firstmaincoro} Let $uv$ be a decomposable word and let
  $u'$ and $v'$ be words obtained from $u$ and $v$, respectively, by
  permuting the entries.  Then,
\[
\pi(uv)\pi(u'v') = \pi(u'v) \pi(uv')
\]
as rational expressions in the $t_i$'s.
\end{coro}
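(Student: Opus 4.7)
The plan is to apply Theorem~\ref{th:main} twice and then cross-multiply. The key observation is that the right-hand side $\pi(\infty v)/\pi(\infty v')$ in Theorem~\ref{th:main} depends on $v$, $v'$, and on the length of $u$, but not on the actual entries of $u$. Since $u'$ is a rearrangement of $u$, it has exactly the same multiset of entries (and the same length) as $u$, so $u'v$ and $u'v'$ are again decomposable words — the entries of $u'$ are strictly larger than those of $v$, which are the same as those of $v'$. Hence the theorem is applicable to the pair $(u'v, u'v')$ as well.

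Concretely, I would first apply Theorem~\ref{th:main} to $uv$ and its rearrangement $uv'$ to get
\[
\frac{\pi(uv)}{\pi(uv')}=\frac{\pi(\infty v)}{\pi(\infty v')},
\]
where $\infty$ is the all-$\infty$ word of the length of $u$. Then I would apply Theorem~\ref{th:main} again, this time to $u'v$ and $u'v'$ (legitimate by the remark above), noting that the $\infty$-word has the same length as $u'$, which is the length of $u$. This yields
\[
\frac{\pi(u'v)}{\pi(u'v')}=\frac{\pi(\infty v)}{\pi(\infty v')},
\]
with the \emph{same} right-hand side. Equating the two left-hand sides and cross-multiplying produces $\pi(uv)\pi(u'v')=\pi(u'v)\pi(uv')$, which is the claim.

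There is essentially no obstacle: the only subtleties to verify are that $u'v$ and $u'v'$ really are decomposable (immediate, since permuting within $u$ or within $v$ preserves the separation of entries across the cut) and that the cross-multiplication is legitimate as an identity of rational functions in the $t_i$. The latter is fine because each $\pi(w)$ is a nonzero rational function of the $t_i$ — in fact strictly positive for positive real $t_i$ — so no division by an identically zero expression occurs.
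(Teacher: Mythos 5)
Your proof is correct and is exactly the argument the paper intends: the right-hand side of Theorem~\ref{th:main} is independent of the actual entries of $u$, so applying the theorem to the pairs $(uv,uv')$ and $(u'v,u'v')$ and equating gives the claim. The checks you add (decomposability of $u'v$ and $u'v'$, and nonvanishing of the $\pi$'s for cross-multiplication) are the right ones and are left implicit in the paper.
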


As a further corollary we obtain the result
that we promised in the introduction.
\begin{coro}\label{co:independence}
  Let $uv$ be a decomposable word of type $\mbf m$ and let $W$ be a
  random word sampled from the stationary distribution of the TASEP on
  $\Omega_{\mbf m}$, conditioned on the event that $W=UV$ is
  decomposable with $U$ and $V$ of the same length (and type) as $u$
  and $v$, respectively. Then, $U$ and $V$ are independent, that is
\[
P(U=u\ \text{and}\ V=v)=P(U=u)P(V=v).
\]
\end{coro}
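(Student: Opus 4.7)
The plan is to deduce this corollary as a clean formal consequence of Corollary~\ref{co:firstmaincoro}, exploiting the elementary fact that a joint distribution which factors as $F(u')G(v')$ automatically has independent marginals.

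First I will unpack the conditioning event. Let $\Pi_u$ and $\Pi_v$ denote the sets of rearrangements of the multisets of entries of $u$ and $v$, respectively. Since $uv$ is decomposable, every word of the form $u'v'$ with $u'\in\Pi_u$ and $v'\in\Pi_v$ is again decomposable with cut at position $|u|$, and conversely the conditioning event is exactly $\{u'v' : u'\in\Pi_u,\ v'\in\Pi_v\}$. Hence the conditional distribution is
\[
P(U=u',\ V=v') \;=\; \frac{\pi(u'v')}{Z}, \qquad Z \;=\; \sum_{u''\in\Pi_u}\sum_{v''\in\Pi_v}\pi(u''v'').
\]

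The key step is to rewrite Corollary~\ref{co:firstmaincoro} as $\pi(u'v') = \pi(u'v)\,\pi(uv')/\pi(uv)$ and read this as a factorization. Setting $f(u') := \pi(u'v)$ and $g(v') := \pi(uv')/\pi(uv)$, we obtain $\pi(u'v') = f(u')\,g(v')$. Summing gives $Z = \bigl(\sum_{u''} f(u'')\bigr)\bigl(\sum_{v''} g(v'')\bigr)$, and therefore
\[
P(U=u',\ V=v') \;=\; \frac{f(u')}{\sum_{u''} f(u'')} \;\cdot\; \frac{g(v')}{\sum_{v''} g(v'')}.
\]
Marginalizing each side in turn identifies the first factor with $P(U=u')$ and the second with $P(V=v')$, which is exactly the required independence.

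I do not expect any substantive obstacle: the entire content of the corollary is already carried by the quadratic identity in Corollary~\ref{co:firstmaincoro}, and what remains is the standard ``product-form distribution implies independence'' argument. The only point that needs checking is that dividing by $\pi(uv)$ is legitimate, which is immediate because $uv\in\Omega_{\mbf m}$ and the chain on $\Omega_{\mbf m}$ is irreducible, so $\pi(uv)$ is strictly positive for every positive choice of the $t_i$'s (equivalently, nonzero as a rational function).
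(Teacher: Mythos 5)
Your proof is correct and takes essentially the same route as the paper: both deduce the corollary directly from Corollary~\ref{co:firstmaincoro} by summing over the rearrangements $u'$ and $v'$. The only cosmetic difference is that you divide by $\pi(uv)$ to exhibit an explicit factorization $f(u')g(v')$ (which forces you to note positivity of $\pi(uv)$, as you do), whereas the paper keeps the identity in product form $\pi(uv)\sum_{u',v'}\pi(u'v')=\sum_{v'}\pi(uv')\sum_{u'}\pi(u'v)$ and avoids the division altogether.
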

\begin{proof}
By definition,
\begin{align*}
P(U=u\ \text{and}\ U=v) &= \pi(uv)/S, \\
P(U=u) &= \sum_{v'}\pi(uv')/S,\ \text{and} \\
P(V=v) &= \sum_{u'}\pi(u'v)/S,\ \text{where} \\
S &= \sum_{u',v'}\pi(u'v')
\end{align*}
and where the sums are taken over all words $u'$ and $v'$ of the same type
and length as $u$ and $v$,
respectively. From Corollary~\ref{co:firstmaincoro} we obtain
\[
\pi(uv)\sum_{u',v'}\pi(u'v')=\sum_{v'}\pi(uv')\sum_{u'}\pi(u'v)
\]
and we are done.
\end{proof}

As an example, consider the TASEP on $\Omega_{(1,1,1,1)}$ with
$n=4$ and $t_1=t_2=t_3=t_4=1$. 
Our result tells us that if we sample a word $w$
from the stationary distribution and it happens that the entries $3$
and $4$ comes before $1$ and $2$, then the order of the $3$ and $4$
gives us no information about the order of the $1$ and $2$. This can
be stated in equational form as $\pi(4321)/\pi(4312) =
\pi(3421)/\pi(3412)$, which is true since $\pi(4321)=1/96$,
$\pi(4312)=\pi(3421)=3/96$, and $\pi(3412)=9/96$ (as a simple
computation yields).  At first glance our result may seem obvious,
at least in the homogeneous case where all $t_i=1$, since the ``large'' entries
cannot tell the ``small'' entries apart and vice versa. However, this
is an illusional simplicity and the condition that the large numbers
are adjacent is essential for our main result. Indeed, if it happens
that the entries $3$ and $4$ are at positions $2$ and $4$, then their
order really tells us something about the order of the entries $1$ and
$2$, since
\begin{equation}
\label{eq:neq}
3/5=\pi(2413)/\pi(1423)\ne\pi(2314)/\pi(1324)=5/3.
\end{equation}

Finally, the result can be generalized to words that are decomposable
into three or more parts. Let us say that a word is
\emph{$k$-decomposable} if it can be written as a concatenation
$u^{(1)}u^{(2)}\dotsb u^{(k)}$ of $k$ words, where all entries of
$u^{(1)}$ are strictly larger than all entries of $u^{(2)}$, which in
turn are strictly larger than all entries of $u^{(3)}$ and so on.
\begin{coro}
  Let $u^{(1)}u^{(2)}\dotsb u^{(k)}$ be a $k$-decomposable word of
  type $\mbf m$ and let $W$ be a random word sampled from the
  stationary distribution of the TASEP on $\Omega_{\mbf m}$,
  conditioned on the event that $W=U^{(1)}U^{(2)}\dotsb U^{(k)}$ is
  $k$-decomposable with $U^{(i)}$ of the same length (and type) as
  $u^{(i)}$ for each i.  Then, the $U^{(i)}$ are independent, that is
\[
P(U^{(i)}=u^{(i)}\ \forall i)=\prod_{i=1}^k P(U^{(i)}=u^{(i)}).
\]
\end{coro}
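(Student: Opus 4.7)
The plan is to show that, as $(u^{(1)},\dotsc,u^{(k)})$ varies over words of the prescribed types with the $k$-decomposition structure preserved,
\[
\pi(u^{(1)} u^{(2)} \dotsm u^{(k)}) \;=\; C \prod_{i=1}^{k} \phi_i(u^{(i)}),
\]
where each $\phi_i$ depends only on $u^{(i)}$ and $C$ is a constant. From this factorisation the independence claim follows by the very summation argument used in the proof of Corollary~\ref{co:independence}: both the joint probability and each marginal split as products over the blocks, and so they agree as products.

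To obtain the product form I would apply Theorem~\ref{th:main} iteratively $k-1$ times, peeling off one block per step. Set $s_i = |u^{(1)}| + \dotsb + |u^{(i-1)}|$ and, for each $i \in \{1,\dotsc,k-1\}$, consider the word
\[
w_i \;=\; \infty^{s_i}\,u^{(i)} u^{(i+1)} \dotsm u^{(k)}
\]
in the TASEP $\Omega_{\mbf m_{i-1}}$, where $\mbf m_{i-1}$ is obtained from $\mbf m$ by merging into $\infty$ every finite class used by $u^{(1)},\dotsc,u^{(i-1)}$. Since $\infty$ is maximal and the classes used by $u^{(i)}$ are the largest finite classes remaining in $\mbf m_{i-1}$, the word $w_i$ is $2$-decomposable as $(\infty^{s_i}u^{(i)})\cdot(u^{(i+1)}\dotsm u^{(k)})$. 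Fixing a reference suffix $(v^{(i+1)\ast},\dotsc,v^{(k)\ast})$ and applying Theorem~\ref{th:main} gives
\[
\pi_{\mbf m_{i-1}}(w_i) \;=\; \phi_i(u^{(i)}) \cdot \frac{\pi_{\mbf m_i}(w_{i+1})}{\pi_{\mbf m_i}(\infty^{s_{i+1}} v^{(i+1)\ast}\dotsm v^{(k)\ast})},
\]
with $\phi_i(u^{(i)}) = \pi_{\mbf m_{i-1}}(\infty^{s_i} u^{(i)} v^{(i+1)\ast}\dotsm v^{(k)\ast})$ depending only on $u^{(i)}$ and the denominator a constant. Chaining these identities for $i=1,\dotsc,k-1$ and setting $\phi_k(u^{(k)}) = \pi_{\mbf m_{k-1}}(\infty^{s_k}u^{(k)})$ then yields the required product representation of $\pi(u^{(1)}\dotsm u^{(k)})$.

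The main item requiring care is the bookkeeping of the sequence of TASEPs $\Omega_{\mbf m_0},\Omega_{\mbf m_1},\dotsc,\Omega_{\mbf m_{k-1}}$ visited by the iteration and the verification at each step that the $2$-decomposability hypothesis of Theorem~\ref{th:main} holds; both follow directly from the original $k$-decomposability of $u^{(1)}\dotsm u^{(k)}$, since $\infty$ is always maximal and after $i-1$ merges the classes used by $u^{(i)}$ are the strictly largest finite ones remaining.
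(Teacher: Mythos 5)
Your proposal is correct, but it takes a more computational route than the paper. The paper's own proof is a three-line reduction: it invokes Corollary~\ref{co:independence} at each split point $i$ to get that the tuples $(U^{(1)},\dotsc,U^{(i)})$ and $(U^{(i+1)},\dotsc,U^{(k)})$ are independent, and then telescopes $P(U^{(i)}=u^{(i)}\ \forall i)=P(U^{(1)}=u^{(1)})P(U^{(i)}=u^{(i)}\ \forall i\ge 2)=\dotsb$. You instead go back to Theorem~\ref{th:main} itself and iterate it across the chain of smaller TASEPs $\Omega_{\mbf m_0},\Omega_{\mbf m_1},\dotsc$, obtaining the explicit factorization $\pi(u^{(1)}\dotsm u^{(k)})=C\prod_i\phi_i(u^{(i)})$ before summing. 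The underlying induction (peel off one block at a time using the two-block result) is the same, but the levels differ: the paper argues with conditional probabilities, you argue with unnormalized stationary weights. Your version is longer but buys a genuinely stronger intermediate statement --- a product formula for $\pi$ over the blocks, not just conditional independence --- and it sidesteps a small point the paper leaves implicit, namely that the two-block independence of Corollary~\ref{co:independence} (which conditions only on decomposability at one split) survives the further conditioning on full $k$-decomposability; in your setup this is automatic because the factorization holds uniformly over all tuples of blocks of the prescribed types, so the normalizing constant itself factors. The bookkeeping you flag (that each $w_i=\infty^{s_i}u^{(i)}\dotsm u^{(k)}$ is $2$-decomposable in $\Omega_{\mbf m_{i-1}}$, and that $\phi_i$ depends only on $u^{(i)}$ once the reference suffixes are fixed) does check out exactly as you describe.
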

\begin{proof}
We know from Corollary~\ref{co:independence} that, for each $i$, the two tuples
$(U^{(1)},\dotsc,U^{(i)})$ and $(U^{(i+1)},\dotsc,U^{(k)})$ are independent,
and it follows that each word $U^{(i)}$ is independent of the tuple
$(U^{(i+1)},\dotsc,U^{(k)})$ of subwords to its right. But this implies that
\begin{multline*}
  P(U^{(i)}=u^{(i)}\ \forall i)=P(U^{(1)}=u^{(1)})P(U^{(i)}=u^{(i)}\ \forall i\ge2)\\
  =P(U^{(1)}=u^{(1)})P(U^{(2)}=u^{(2)})P(U^{(i)}=u^{(i)}\ \forall
  i\ge3)=\dotsb =\prod_{i=1}^k P(U^{(i)}=u^{(i)}).
\end{multline*}
\end{proof}

\section{Queues and multi-line queues}
\label{se:ferrari}
Ferrari and Martin~\cite{ferrari-martin} studied the \emph{homogenous}
case where $t_1=\dotsb=t_r$ and
discovered that the TASEP on a ring
is a projection of a richer process involving combinatorial structures called
multi-line queues. Ayyer and Linusson~\cite{ayyer-linusson} generalized
the construction to the inhomogeneous case. Remarkably,
this transfers questions about the stationary distribution
of the TASEP to enumerative questions about multi-line queues and we state
that result in Theorem~\ref{th:mlq}. But first we must define
queues and multi-line queues.

\subsection{Queues}\label{sec:queues}
A \emph{queue} $q$ of capacity $c$ is an $n$-ring where $c$
sites are marked as \emph{terminal}.  It defines a map $q$ on words,
defined as follows.

Given an input word $u$ of type $(m_1,\dotsc,m_r)$, first replace
each letter $\infty$ (empty site) by $r+1$ (particle of a new maximal
size).  Next, put the queue immediately below $u$ to form a $2\times
n$-array of sites (see Figure \ref{fi:single_queue}), and perform the
following queuing procedure.

We will associate a word $q(u)$ to $q$ whose empty sites are exactly at the
non-terminal sites of $q$.

At the start, all terminal positions of $q$ are \emph{unoccupied}.
\begin{itemize}
\item We will let the $n$ input particles enter the queue one by one
in any order such that smaller particles come before larger ones.
(The order of particles of the same size does not matter.)
\item According to such an ordering, each particle first enters the
queue by moving one step down, and then it moves zero or more steps
(cyclically) to the right until it reaches an unoccupied terminal
site.  If all terminal sites are occupied, the particle just continues
around the ring forever and can be discarded.  (However, it may have
an impact on the \emph{weight} of $q$ which is defined below.)
\end{itemize} The resulting word is the output of the map.

Note that each non-terminal site in $q$ is visited by at least one
particle.  For $i=1,2,\dotsc,r$, let $\alpha_i$ be the number of
non-terminal sites in the queue that are first visited by a particle
of size $i$.

The monomial $t_1^{\alpha_1}\dotsm t_r^{\alpha_r}$ is
called the \emph{weight of $q$ with respect to $u$}.

Note that neither the weight nor the resulting word depends on
the order in which particles of the same size enter the queue.

\begin{figure}
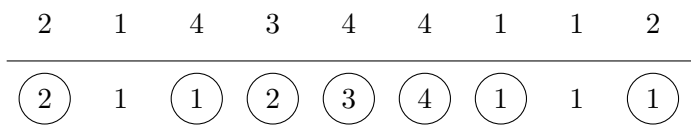

\exC
\caption{A $2\times n$ array representing a queue with input word
$21\infty 3\infty\infty 112$ (converted to $214344112$) and output
word $2\infty 12341\infty 1$.  The circles on the bottom row
corresponds to the terminal positions of the queue.  The two non-circled
1's in
the bottom row corresponds to non-terminal positions visited by $1$.
Thus the weight of this queue with respect to this input is $t_1^2$.}
\label{fi:single_queue} 
\end{figure}

\subsection{Multi-line queues}
A \emph{multi-line queue}
of type $\mbf m=(m_1,\dotsc,m_r)$ is
a sequence $\mbf q=(q_1,\dotsc,q_r)$ of queues such that $q_i$ has
capacity $m_1+\dots+m_i$ for each $i$.  We will think of the
queues as being stacked vertically and view the multi-line queue as an
$r\times n$-array of sites where exactly $m_1+\dots+m_i$ of the
sites in row $i$ are marked as \emph{terminal}.

Let us feed the first queue $q_1$ with the ``empty'' word
$\infty\dots\infty$ and then feed $q_2$ with the output from $q_1$ and
$q_3$ with the output from $q_2$ and so on, see Figure
\ref{fi:multi_queue}. (We will denote an empty word simply by $\infty$,
the length of the word usually being clear from the context.)  The resulting
word $w(\mbf q)=(q_r\circ\dots\circ q_2\circ q_1)(\infty)$ is called
the \emph{output word} of $\mbf q$ and has type $\mbf m$.

The \emph{weight} of $\mbf q$, denoted by $[\mbf q]$, is a monomial in
the variables $t_1,\dots,t_r$ defined as the product of the
weights of $q_i$ with respect to $(q_{i-1}\circ\dots\circ q_2\circ
q_1)(\infty)$ for $1\le i\le r$.  Also, for any word $w$, define its
\emph{weight} $[w]$ as the polynomial in $t_1,\dots,t_r$ obtained
by adding the weights of all multi-line queues $\mbf q$ with output
$w(\mbf q)=w$.  Note that letting all $t_i = 1$ in the polynomial
$[w]$ gives the number of such multi-line queues.  Finally, let
$Z_{\mbf m}=Z_{\mbf m}(t_1,\dots,t_r)$ denote the sum of the
weights of all multi-line queues of type $\mbf m=(m_1,\dots,m_r)$.
(Recall that $n$ is fixed.)

\begin{figure}
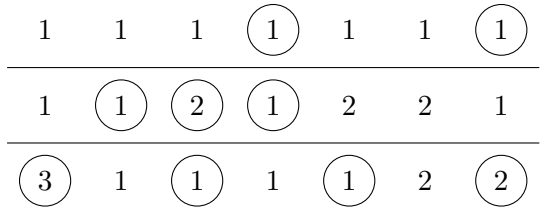

\exD
\caption{A multi-line queue. The circles correspond to terminal
positions in the queues. The output of this multi-line queue is
$3\infty 1 \infty 1 \infty 2$. A number $i$ in a non-terminal position
corresponds to a particle $i$ having visited that position first. Thus the
weight of this multi-line queue is $t_1^9 t_2^3$.}
\label{fi:multi_queue}
\end{figure}

The following theorem has appeared in increasingly stronger versions
in \cite{angel}, \cite{ferrari-martin} and \cite{ayyer-linusson}.  In
\cite{ayyer-linusson} the strongest version (given below) is stated as
a conjecture.  The conjecture has recently been proved in at least two
ways \cite{arita-mallick}, \cite{linusson-martin}.

\begin{theo}
\label{th:mlq}
The stationary distribution $\pi$ of the TASEP on
$\Omega_{\mbf m}$ with inverse rates $t_1,\dots,t_r$ is given by
$\pi(w)=[w]/Z_{\mbf m}$.
\end{theo}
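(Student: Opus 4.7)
My plan is to prove Theorem~\ref{th:mlq} by constructing a continuous-time Markov chain on the set of multi-line queues of type $\mbf m$ with two properties: its unique stationary distribution assigns mass proportional to $[\mbf q]$ to each multi-line queue $\mbf q$, and the projection $\mbf q\mapsto w(\mbf q)$ onto output words is a map of Markov chains onto the TASEP on $\Omega_{\mbf m}$. Once both are established, the stationary distribution of the TASEP is the pushforward of the stationary measure on multi-line queues, which is by definition $\pi(w)=[w]/Z_{\mbf m}$.

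For the lifted dynamics, I would attach to each terminal site in row $i$ of a multi-line queue an exponential clock of rate $1/t_i$. When the clock at a terminal site $s$ in row $i$ rings, the particle recorded at column $s$ of the output attempts to swap with its left neighbour, and this swap triggers a cascade of local adjustments in rows $i,i+1,\dotsc,r$ designed so that applying the queueing procedure of Section~\ref{sec:queues} to the new configuration still yields the swapped output word. The cascade must affect exactly one site per row below row $i$, so that the change in the monomial weight can be computed row by row. Checking that $[\mbf q]$ is invariant then reduces to a master equation: for each $\mbf q$, the weighted sum of incoming rates from multi-line queues $\mbf q'$ must equal $[\mbf q]$ times the total outgoing rate, which should collapse via a local identity on each single-row cascade step.

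The second step is to check that a swap-plus-cascade triggered in row $i$ changes $w(\mbf q)$ by exactly one TASEP transition, and that the combined rate at which the pair of output letters $(w_{k-1},w_k)$ is swapped equals $1/t_{w_k}$ summed over all the lifted configurations producing that swap, matching the TASEP rate. The main obstacle will be designing the cascade so that TASEP-compatibility of the projection and weight-preservation of the lifted chain hold simultaneously: too local a cascade breaks compatibility, while too global a cascade makes the balance identity intractable. Ferrari and Martin carried this out in the homogeneous case, and the inhomogeneous refinement was completed by Arita-Mallick via a matrix ansatz and by Linusson-Martin via a refined coupling. I would follow the combinatorial coupling approach, since it keeps both the weight identity and the TASEP projection visible at the level of a single queueing step and avoids hiding them inside an algebraic reformulation.
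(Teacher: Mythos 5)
The paper does not prove this theorem at all: it is imported as a black box from the literature (stated as a conjecture in the strongest form by Ayyer--Linusson and proved by Arita--Mallick via a matrix ansatz and by Linusson--Martin via a coupling), so there is no internal proof to compare yours against. Your proposal correctly identifies the standard strategy --- lift the TASEP to a Markov chain on multi-line queues whose stationary measure is proportional to $[\mbf q]$ and which projects (lumps) onto the TASEP --- and correctly attributes the homogeneous case to Ferrari and Martin and the inhomogeneous completion to the two cited works.

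However, as written your text is a research plan, not a proof, and the gap is exactly where the difficulty lives. You never define the cascade: the rule specifying, when a particle at a terminal site in row $i$ moves, which single site in each of rows $i+1,\dotsc,r$ is affected and how, is the entire content of the Ferrari--Martin construction, and without it neither of your two claimed properties can be checked. You acknowledge this yourself (``the main obstacle will be designing the cascade so that \ldots''), but then defer the resolution to the very papers being cited rather than supplying it. Moreover, the two verifications you reduce the theorem to are both nontrivial and are asserted rather than performed: the master equation for the weighted measure does not obviously ``collapse via a local identity'' once the inhomogeneous weights $t_1,\dotsc,t_r$ enter (this is precisely why the inhomogeneous case remained a conjecture after Ferrari--Martin), and the lumping property --- that the projected transition rates depend only on the output word $w(\mbf q)$ and equal $1/t_{w_k}$ --- requires an argument that the cascade never distinguishes between distinct multi-line queues with the same output. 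A complete proof must either carry out these steps or explicitly cite them, as the paper does; an outline that names the obstacles without overcoming them is not a substitute.
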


At this point, the reader may wish to
skip directly to the proof of Theorem \ref{th:main} in Section
\ref{sec:proof}, and then read the results in the intermediate
sections (on which the proof depends).  In the intermediate sections
we will, for each word $u$, express $[u]$ in terms of various $[v]$'s,
where the $v$'s are \emph{simpler} than $u$.

\begin{defi}
A type $(m_1, \dots, m_s)$ is \emph{simpler than} another
type $(m'_1, \dots, m'_r)$ if $s<r$, or $s=r$ and $m_s > m'_r$.  Let
$u, v$ be two words of the same length.  We say that $v$ is \emph{simpler
than} $u$ if either
\begin{itemize}
\item $v$ has simpler type than $u$, or
\item $u$ and $v$ have the same type, and if $i$, $j$ are the smallest
numbers such that $u_i = \infty$ and $v_j = \infty$, then $i > j$.
\end{itemize}
\end{defi}

\section{The first lemma}

For any word $w$ of type $\mbf{m}=(m_1,\dots,m_r)$, let $w^-$ denote
the word obtained from $w$ by removing all particles of maximal size
(that is, replacing all occurrences of $r$ in $w$ by $\infty$).  The
type of $w^-$ is clearly $(m_1,\dots,m_{r-1})$, which we denote
by $\mbf{m}^-$.  The following easy and well-known lemma will give us
our first recursion.

\begin{lemma}
\label{lm:prob} For any word $u$ of type $\mbf m^-$, we have $\sum_w
\pi(w)=\pi(u)$, where the sum is over all words $w$ of type $\mbf m$
such that $w^-=u$.
\end{lemma}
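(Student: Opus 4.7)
The plan is to prove this by a projection (lumpability) argument: the TASEP on $\Omega_{\mbf m}$ projects, via the map $\phi\colon w\mapsto w^-$, to the TASEP on $\Omega_{\mbf m^-}$ with inverse rates $t_1,\dotsc,t_{r-1}$. Once this is established, the uniqueness of the stationary distribution forces the pushforward $\phi_*\pi_{\mbf m}$ to equal $\pi_{\mbf m^-}$, which is exactly the claim $\sum_{w:\,w^-=u}\pi(w)=\pi(u)$.

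First I would verify that the projection is indeed a Markov chain, i.e.\ that the transition rate from a state $w$ with $\phi(w)=u$ into the fiber $\phi^{-1}(u')$ depends only on $u$, not on the choice of lift $w$. The transitions of the original chain fall into three types: (i) a swap of entries $i<r$ and $j>i$ at adjacent positions, with rate $1/t_i$; (ii) a swap of an $r$ past an $\infty$ on its left, with rate $1/t_r$; (iii) no swap. In case (ii), both $r$ and $\infty$ map to $\infty$ under $\phi$, so the transition leaves $w^-$ unchanged. In case (i), the projected neighbour configuration is identical whether $j=r$ or $j=\infty$ (both collapse to $\infty$) and whether $j<r$ (unchanged), so the induced transition on $w^-$ is: entry $i$ swaps with a strictly larger left neighbour, occurring with rate $1/t_i$. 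This is exactly a transition of the TASEP on $\Omega_{\mbf m^-}$ with the same inverse rates.

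Next I would add up the contributions: for fixed $u$ and $u'$, the total rate from any $w\in\phi^{-1}(u)$ into $\phi^{-1}(u')$ equals the rate $u\to u'$ in the $\mbf m^-$-TASEP (the type-(ii) transitions contribute only to the diagonal $u=u'$, and do not spoil lumpability). Hence $\phi$ is a lumping, and $\phi_*\pi_{\mbf m}$ is a stationary measure for the TASEP on $\Omega_{\mbf m^-}$. By uniqueness of the stationary distribution for that chain, $\phi_*\pi_{\mbf m}=\pi_{\mbf m^-}$, which reads
\[
\sum_{w:\,w^-=u}\pi(w)=\pi(u),
\]
as required.

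The only delicate point is the bookkeeping in case (ii): one has to confirm that the rate-$1/t_r$ swap of $r$ and $\infty$ really acts trivially on $w^-$ and does not contribute any spurious off-diagonal rate, and, symmetrically, that no transition in the $\mbf m^-$-TASEP is missed by the projection. Since every swap allowed by the rule ``swap with a larger left neighbour'' in $\Omega_{\mbf m^-}$ is realised by the same clock in $\Omega_{\mbf m}$ (regardless of whether the large left neighbour came from an $r$ or an $\infty$), the matching is exact, and the lemma follows.
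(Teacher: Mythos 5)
Your proof is correct. Note that the paper itself gives no argument for this lemma at all: it is stated as ``easy and well-known'' and used without proof (only the special case $m_1+\dotsb+m_r=n$ is discussed in the accompanying remark). The lumping argument you give is precisely the standard justification of this colour-merging property of the multi-species TASEP (it is the reason the $r$-type process projects onto the $(r-1)$-type one when the top class is identified with the holes, as in Ferrari--Martin and Angel), so your proposal correctly supplies the omitted proof rather than diverging from it. The two points you flag as delicate are exactly the right ones, and both check out: an $r$-particle can only swap with an $\infty$ on its left, so type-(ii) moves are invisible after projection and affect only the diagonal of the generator, which is irrelevant for the lumpability criterion and for stationarity; and for $i<r$ the left neighbour of an $i$-particle is strictly larger in $w$ if and only if it is strictly larger in $w^-$ (an $\infty$ in $w^-$ lifts to either $r$ or $\infty$, both exceeding $i$), so the allowed type-(i) moves and their rates $1/t_i$ are determined by $u$ alone and match the $\mbf m^-$-TASEP exactly. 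Combined with irreducibility of the chain on $\Omega_{\mbf m^-}$ (which gives the uniqueness of $\pi$ you invoke), the conclusion follows.
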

\begin{remark}\label{re:trivial}
In the special case where $m_1+\dotsb+m_r=n$ the lemma
simply states the obvious fact that, given a word with all entries
finite, we can disregard the last variable
$t_r$ and replace all occurrences
of $r$ by $\infty$ without altering the stationary probability.
(In fact this does not alter the value of the bracket either since
$Z_{\mbf m}=Z_{\mbf m^-}$ in this case; see Theorem~\ref{th:mlq} and
Corollary~\ref{co:explicitZ}.)
\end{remark}

Translating Lemma \ref{lm:prob} using Theorem \ref{th:mlq} and
specializing to decomposable words (for later convenience), we get the
following.
\begin{lemma}
\label{lm:ET} Suppose that $uv$ is a decomposable word of type
$\mbf{m}$ and that all maximal particles precede all empty sites in $u$. Then
	\[ [uv] = \frac{Z_{\mbf{m}}}{Z_{\mbf{m}^-}}[u^- v] - \sum_{u'}
[u'v],
	\] summing over all $u' \neq u$ obtained from $u$ by permuting
the particles of maximal size and empty sites. Furthermore, all the words
occurring on the right hand side are decomposable and simpler than
$uv$. 
\end{lemma}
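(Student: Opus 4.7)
The plan is to obtain the stated identity as a direct translation of Lemma \ref{lm:prob}, applied to the word $u^-v$, into the bracket language via Theorem \ref{th:mlq}. Since $uv$ is decomposable and the type contains $m_r > 0$, every particle of size $r$ is forced to lie in $u$, and since every entry of $v$ is strictly less than every entry of $u$, the word $v$ contains neither $r$'s nor $\infty$'s. Consequently $u^-v$ really is a word of type $\mbf{m}^-$. Lemma \ref{lm:prob} then gives
\[
\sum_w \pi(w) = \pi(u^-v),
\]
and clearing denominators via $\pi(\cdot) = [\cdot]/Z$ turns this into
\[
\sum_w [w] = \frac{Z_{\mbf{m}}}{Z_{\mbf{m}^-}}[u^-v],
\]
where the sum ranges over all words $w$ of type $\mbf{m}$ with $w^- = u^-v$.

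Next, I would identify the words $w$ in this sum. The $\infty$-positions of $u^-v$ are exactly the positions of $u$ that originally held an $r$ or an $\infty$ (since $v$ has neither). A word $w$ with $w^- = u^-v$ is thus specified by choosing which $m_r$ of these positions to refill with $r$; letting this choice vary, $w$ runs precisely over the words $u'v$ where $u'$ is obtained from $u$ by permuting the multiset of $r$'s and $\infty$'s among their joint positions. Isolating the summand $u' = u$ then yields the claimed identity.

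Finally, I would verify that each word on the right-hand side is both decomposable and simpler than $uv$. Decomposability is immediate: $u^-$ and every $u'$ consist of entries pointwise at least as large as those of $u$ (in $u^-$, $r$'s are replaced by $\infty$'s; in $u'$, the multiset is unchanged), so they still strictly dominate $v$. For simplicity, $u^-v$ has type $\mbf{m}^-$, which has one fewer part, so it is simpler by the first clause of the definition. For $u'v$ with $u' \ne u$, the hypothesis that all maximal particles of $u$ precede all of its $\infty$'s places the first $\infty$ of $u$ at the latest possible position among the joint $r$-and-$\infty$ positions, so any nontrivial rearrangement must move an $\infty$ strictly earlier, fulfilling the second clause. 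The only real obstacle is this bookkeeping step, namely confirming that $v$ genuinely contains neither $r$'s nor $\infty$'s so that the $\infty$-positions of $u^-v$ are correctly tracked; once that is checked, the rest is a direct transcription of Lemma \ref{lm:prob}.
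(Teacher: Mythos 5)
Your proof is correct and follows essentially the same route as the paper, which likewise obtains the identity by translating Lemma~\ref{lm:prob} through Theorem~\ref{th:mlq} and, in its proof environment, only verifies the ``decomposable and simpler'' claims via the same observations you make (the type drop for $u^-v$, and the fact that $u$ is the arrangement whose first empty site comes latest). You actually supply more detail than the paper, in particular the bookkeeping that $v$ contains no maximal particles or empty sites, so the words $w$ with $w^-=u^-v$ are exactly the $u'v$.
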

\begin{proof} The word $u^-v$ is simpler than $uv$ since it has
simpler type, and each $u'v$ is simpler than $uv$ since among all
rearrangements of empty sites and maximal particles in $u$, $u$ is the
one where the first empty site comes latest, by the assumption.
\end{proof}

For example, if $u|v = 3434\infty 3|21221$
(the delimiter is for emphasis only), we get the formula $ [3434
\infty 3|21221]= \frac{Z_{2,3,3,2}}{Z_{2,3,3}} [3 \infty 3 \infty
\infty 3|21221]-[343\infty 43|21221]-[3\infty
3443|21221]$,
where each term on the right hand side is simpler than $uv$.
(Using Lemma \ref{co:explicitZ}, we have
$\frac{Z_{2,3,3,2}}{Z_{2,3,3}} = h_{11-(2+3+3+2)}(t_1,t_1,
t_2,t_2,t_2,t_3,t_3,t_3,t_4,t_4,t_4) = t_1+3t_2+3t_3+3t_4$, though the
explicit form of this factor is not important for the proof.)

Lemma \ref{lm:ET} is not very useful by itself. It will, however, work
in those (few) cases where Lemma \ref{lm:JT} below fails.

\section{The second lemma}

Our goal in this section is to find a more general formula for relating
$[u]$ to $[v]$ for words $v$ simpler than $u$.
A natural way to find such a formula is to consider a pair $(\mbf{p},
\mbf{q})$ of multi-line queues differing only in their last respective
queues $p$ and $q$. Then the bottom row of one of them will be
simpler than the bottom row of the other. (In fact, the relation between
them will be a covering relation in the partial order given by
``simpler than''.)

We will assume that the first site in $p$ is terminal while the first
site in $q$ is non-terminal.  The multi-line queues $\mbf{p}$ and
$\mbf{q}$ will be otherwise identical.

Let us start by looking at an example.
Consider the queue of length 10 with non-terminal sites 4 and 9.
In Figure~\ref{fi:ex4.3}
we feed it with the
word $w' = 21\infty 2\infty\infty 3\infty 31$ and obtain
the output word $v = 213\infty 2434 \infty 1$. What would happen if
the first site of the queue were non-terminal instead?

The 2-particle that settled at site 1 before will continue to the right
until it reaches a non-occupied terminal site.
That site could not be site 2 because it is
already occupied by a 1-particle (since 1 is less than 2).
But the 3-particle that will try to occupy
site 3 has not yet arrived so the 2-particle will stop there. Later,
when the 3-particle arrives it finds that site 3 is already occupied so it
continues to the right and stops at site 6. Later, when the 4-particle
arrives at site 6 it finds it already occupied and continues to the right.
Either of the two 4-particles will be unable to find an unoccupied
terminal position and the process comes to an end. The output word
will be $w = \infty12\infty 2334 \infty 1$ and can be obtained from
$u$ by the operation $w=v^{1\ra}$ defined as follows.

\begin{defi}
\label{de:jump}
Let $u$ be a word with at least two maximal particles
and let $j_0$ be an index such that $u_{j_0} \neq \infty$.
Define a sequence $j_1,\dotsc,j_s$ called the
\emph{jumping sequence from $j_0$}
recursively by letting
$j_k$ be the first position cyclically to the right of
$j_{k-1}$ such that $u_{j_k}$ is finite and strictly larger than
$u_{j_{k-1}}$. If no such position exists, we define $s = k-1$.

Now
define a word $v = u^{j_0\ra}$ by letting $v_{j_k} = u_{j_{k-1}}$ for
$1 \leq k \leq s$ and $v_{j_0} = \infty$.  The words $v$ and $u$ agree
in all other positions. 
\end{defi}
For example, $155231 \infty 42^{4\ra} = 145\infty 21\infty 32$ and
$11^{1\ra} = \infty1$.

We stress the remarkable fact (for now just an observation in this
particular example) that $w$ is determined by $v$ (and independent of
$w'$).
However it is not clear what the weight of $[q]$ is in terms
of $[p]$; this requires knowing more about $w'$.

Let us turn this relation around and for a fixed $w$ as above ask
which $v$ are possible?  It happens that there are $18$ such $v$'s so
let us do this for a smaller example.
Suppose instead that $w = \infty 3412\infty 31$.  Then there are
two $v$'s satisfying $v^{1\ra} = w$: $43412\infty 31$ and $34412\infty
31$.  (In Lemma~\ref{lm:laexplicit} we make explicit how to find these
words though we will not need it for the proof of Theorem~\ref{th:main}.)
Furthermore, in this smaller example, we can actually determine the
weight of $[q]$ in terms of $[p]$: If $v = 43412\infty 31$ then
$[\mbf{q}] = t_4[\mbf{p}]$, and if $v = 34412\infty 31$ then
$[\mbf{q}] = t_3[\mbf{p}]$ (independently of $w'$).

As we shall see later, the property of $w$ that allows us to read off
$[\mbf{p}]$ in terms of $[\mbf{q}]$ is that the first maximal particle
in $w$ to the right of the empty site $1$ comes before any additional
empty site.

Now, the weight of all multi-line queues with bottom row $v$ is counted by
$[v]$ on one hand and by $t_4[43412\infty 31] + t_3[34412\infty 31]$
on the other hand, so
\[ [\infty 3412\infty 31] = t_4[43412\infty 31] + t_3[34412\infty 31].
\]
(The number of multi-line queues representing $\infty 3412\infty 31$,
$43412\infty 31$ and $34412\infty 31$ are $624,136$ and $488$; we have
$624 = 136 + 488$ as predicted by the formula above when we let all
$t_i$ equal $1$.)

\begin{figure}
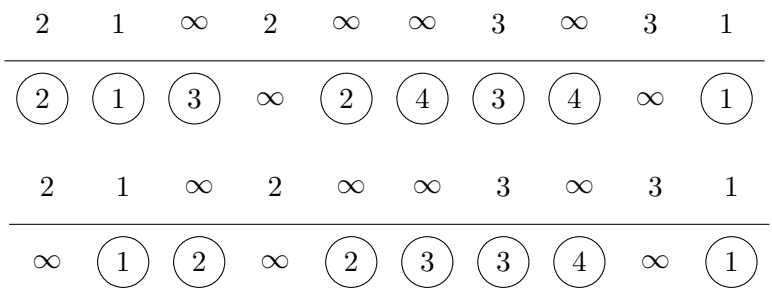

\exA \vspace{0.5cm}

\exB
\caption{The last two lines of two multi-line queues that are
identical except the for site $1$ in the last row.}
\label{fi:ex4.3}
\end{figure}

In the next lemma we give the general statement corresponding to the
examples above.

\begin{lemma}
\label{lm:bijection} Let $(\mbf p,\mbf q)$ be a pair of multi-line
queues that are identical except that site $j_0$ of the last queue is
terminal in $\mbf p$ and non-terminal in $\mbf q$.
Then, \begin{enumerate}
\item[(a)] $w(\mbf q)=w(\mbf p)^{j_0\ra}$, and
\item[(b)] if, starting from site $j_0$ in $w(\mbf p)$ and going
cyclically to the right, the first maximal particle comes before any
empty site, then $[\mbf q]=t_k[\mbf p]$, where $k$ is
the size of the particle at site $j_0$ in $w(\mbf p)$.
\end{enumerate}
\end{lemma}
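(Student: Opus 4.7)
The plan is to trace the queueing process in the two last queues $q_p$ and $q_q$ of $\mbf p$ and $\mbf q$, fed with the common input $w'=(q_{r-1}\circ\dotsb\circ q_1)(\infty)$. Since all earlier queues coincide, they contribute identical factors to $[\mbf p]$ and $[\mbf q]$, so it will suffice to compare the outputs and weights of $q_p$ and $q_q$. Throughout, I will exploit the freedom to reorder particles of the same size when entering a queue, since the output is unaffected by such reorderings.

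For part (a), I will establish by induction along the jumping sequence $j_0,j_1,\dotsc,j_s$ that the following cascade occurs in $q_q$: for each $l=0,1,\dotsc,s-1$, the size-$u_{j_l}$ particle that settled at $j_l$ in $q_p$ now settles at $j_{l+1}$ in $q_q$, and the size-$u_{j_s}$ cascade particle is discarded. The base observation is that for every size $s<u_{j_0}$ the trajectories in $q_p$ and $q_q$ are identical: if a size-$s$ particle had $j_0$ as its first unoccupied terminal in $q_p$ it would stop there, contradicting $u_{j_0}\neq s$. For the inductive step I will order the size-$u_{j_l}$ particles so that the cascade one (the one which settled at $j_l$ in $q_p$) enters last; by the same contradiction argument applied to the still-unoccupied $j_{l+1}$, the earlier size-$u_{j_l}$ particles settle at the same positions in both queues. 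The cascade particle then finds $j_l$ occupied by the previous cascade and continues; by the definition of $j_{l+1}$ every terminal strictly between $j_l$ and $j_{l+1}$ has $u$-value $\le u_{j_l}$ and is already occupied, so the particle stops exactly at $j_{l+1}$. Sizes strictly between $u_{j_l}$ and $u_{j_{l+1}}$ act identically in both queues by the same contradiction argument. When $l=s$, no unoccupied terminal with $u>u_{j_s}$ remains and the cascade particle is discarded. Reading off the terminal contents yields $w(\mbf q)=w(\mbf p)^{j_0\to}$.

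For part (b), the hypothesis forces $u_{j_s}=r$ and makes every position strictly between $j_0$ and $j_s$ a terminal of $u$. Consequently each cascade extension $j_l\to j_{l+1}$ with $l<s$ traverses only terminal sites, so the cascade particles of these sizes visit precisely the same non-terminal sites in $q_q$ as in $q_p$. The only new non-terminal visits in $q_q$ are that the size-$u_{j_0}$ cascade particle visits $j_0$ (which has become non-terminal in $q_q$), and that the discarded size-$r$ particle visits additional sites beyond $j_s$; the latter cannot lower any first-visitor size since $r$ is the maximum. For the new non-terminal $j_0$, no particle of size strictly less than $k=u_{j_0}$ can reach $j_0$ (again by the base contradiction argument), while the size-$k$ cascade particle does, so the first-visitor size at $j_0$ is exactly $k$. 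Therefore $[q_q]=t_k\,[q_p]$ and hence $[\mbf q]=t_k\,[\mbf p]$.

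The main obstacle is the inductive bookkeeping of the cascade: maintaining the invariant that the occupied terminals of $q_q$ just before processing size $u_{j_{l+1}}$ are precisely those of $q_p$ at the same stage with $j_0$ removed and $j_{l+1}$ added, and verifying that each overshoot stops exactly at the next jumping-sequence position rather than at some intermediate unoccupied terminal.
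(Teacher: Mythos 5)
Your argument is correct and follows essentially the same route as the paper's: run the two final queues in parallel on the common input, track the single displaced particle as it cascades rightward along what turns out to be the jumping sequence to prove (a), and for (b) observe that $j_0$ is the only non-terminal site whose first-visitor size can change, contributing the factor $t_k$. The only notable difference is cosmetic: the paper indexes the induction by successive first visits to the current discrepancy site, which sidesteps your device of reordering the equal-size particles so that the ``cascade'' one enters last (a step you should phrase carefully, since which particle settles at $j_l$ depends on the chosen order), and in (b) you should add one sentence noting that the discarded maximal particle cannot \emph{change} (not merely lower) any first-visitor size because smaller particles enter the queue earlier.
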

\begin{proof} We may assume without loss of generality that
$j_0=1$.

Let $w'=(p_{r-1}\circ\dots\circ p_2\circ
p_1)(\infty)=(q_{r-1}\circ\dots\circ q_2\circ q_1)(\infty)$ be the
input permutation to the final queue $p:=p_r$ in $\mbf p$ and to the
final queue $q:=q_r$ in $\mbf q$.  Let us perform the queueing
procedure as defined in Section~\ref{sec:queues} for the queues $p$
and $q$ in parallel and with the same ordering of the input particles
(we need not consider all of $\mbf p$ or $\mbf q$ anymore).

We will pause this procedure at some moments in time for a close
examination, and these moments will be indexed by $k=0,1,2,\dots$ just
for bookkeeping.


Let us say that a site in a queue is \emph{open} if it is terminal and
unoccupied (so from the beginning all terminal sites are open).  As we
will see by induction, at each moment $k$, the queues $p$ and $q$ have
the same set of open sites, except for one site $j_k$ which is open in
$p$ but not in $q$.  From the beginning, at moment 0, this special
site is the first one, $j_0=1$.

For brevity, let us put $v:=w(\mbf p)$ and $w:=w(\mbf q)$.  The
induction step goes as follows.
\begin{itemize}
\item When we resume the queuing procedure after moment $k$, the
particles will move identically in $p$ and $q$ until the site $j_k$ is
visited by some particle for the first time in $p$.  In $p$ the
particle will stop at $j_k$ so it has size $v_{j_k}$, but in $q$ site
$j_k$ is not open, so here the corresponding particle will continue to
the right until it reaches the first open site $j_{k+1}$ to the right
of $j_k$, and hence $w_{j_{k+1}}=v_{j_k}$.  At this time we pause
again and call it moment $k+1$.
\item However, it may happen that the particle never reaches an open
site in $q$.  In this case, $v_{j_k}=r$ and the remaining part of the
queuing procedure will be perfectly identical for $p$ and $q$.  There
is no need for further pauses.
\end{itemize}

During the induction step above, before the $v_{j_k}$-particle stops
at site $j_{k+1}$ in $q$ it passes all terminal sites between $j_k$
(inclusive) and $j_{k+1}$ (exclusive), so these are already occupied.

Hence $v_{j_k}$ is greater than or equal to all finite entries strictly
between $v_{j_k}$ and $v_{j_{k+1}}$ but it is smaller than or equal to
$v_{j_{k+1}}$. It follows that $w=v^{1\ra}$, showing (a).

To show (b), suppose that no empty site of $v$ precedes the first
$r$-particle, that is, there is no non-terminal site to the left of
site $j_s$ in $p$.  Then the first visiting particle of each
non-terminal site of $p$ is also the first visitor of the same
non-terminal site in $q$.  But the first site is terminal in $p$ but
non-terminal in $q$, and this site is first visited by a particle of
size $v_1$.  We conclude that $[\mbf p]=t_{v_1}[\mbf q]$.
\end{proof}

Fix a word $w$ with $w_1 = \infty$.  Note that, by the previous lemma,
the sets $\{(\mbf{p}, \mbf{q}) : w(\mbf{p})^{1\ra} = w\}$ and
$\{(\mbf{p}, \mbf{q}) : w(\mbf{q}) = w\}$ (where $\mbf{p}$ and
$\mbf{q}$ differ in the first site of the last row only, as earlier)
are equal.  This proves the following lemma.

\begin{lemma}
	\label{lm:rec2} Suppose $u$ is a word such that site $j_0$ is
empty and, going
cyclically to the right from $j_0$, the first maximal particle
comes before any additional empty site.
Then
		\[ [u] = \sum_v t_{v_{j_0}}[v],
		\] summing over all $v$ with $v^{j_0\ra} = u$.
\end{lemma}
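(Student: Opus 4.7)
The plan is to apply Lemma~\ref{lm:bijection} directly, formalizing the observation made just before the lemma statement. Since $u_{j_0} = \infty$, every multi-line queue $\mbf q$ with $w(\mbf q) = u$ has site $j_0$ of its last row non-terminal. Toggling this one site to terminal produces a multi-line queue $\mbf p$ whose output $v := w(\mbf p)$ automatically has $v_{j_0}$ finite, and the operation is an involution, so $\mbf q \leftrightarrow \mbf p$ is a bijection. Lemma~\ref{lm:bijection}(a) pins down the possible partners: they are exactly the multi-line queues $\mbf p$ whose output satisfies $v^{j_0 \ra} = u$. Grouping by $v$ yields
\[
[u] \;=\; \sum_{\mbf q : w(\mbf q) = u} [\mbf q] \;=\; \sum_{v : v^{j_0 \ra} = u} \ \sum_{\mbf p : w(\mbf p) = v} [\mbf q(\mbf p)],
\]
where $\mbf q(\mbf p)$ denotes the image of $\mbf p$ under the toggle. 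Once we show $[\mbf q(\mbf p)] = t_{v_{j_0}} [\mbf p]$, the inner sum collapses to $t_{v_{j_0}} [v]$, which is the desired formula.

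The weight identity is precisely the conclusion of Lemma~\ref{lm:bijection}(b), and the main obstacle is verifying its hypothesis for each admissible $v$: starting from $j_0$ in $v$ and going cyclically right, the first $r$-particle precedes any $\infty$. I will deduce this from the hypothesis on $u$ by analyzing the jumping sequence $j_0, j_1, \dots, j_s$ in $v$. First, $v_{j_s} = r$: the jumping sequence can only terminate because either $v_{j_s} = r$ or $v$ contains no finite entry exceeding $v_{j_s}$, and in the latter case the multiset of finite entries of $u$ --- which equals that of $v$ with one copy of $v_{j_s}$ removed --- would contain no $r$-particle, contradicting the hypothesis on $u$. Second, no $\infty$ of $v$ lies cyclically between $j_0$ and $j_s$: by the minimality property of the jumping sequence, all entries of $u$ in the cyclic interval $(j_0, j_s]$ are sub-maximal (either $u_{j_k} = v_{j_{k-1}} < r$ at the jumping positions, or $\le v_{j_{k-1}} < r$ at interior positions, which agree with the corresponding entries of $v$), so the first $r$ in $u$ right of $j_0$ lies strictly past $j_s$; the hypothesis on $u$ then forbids any $\infty$ at positions of $u$ between $j_0$ and this first $r$, and since $u$ and $v$ agree off the jumping sequence while the jumping sequence itself carries only finite entries, no $\infty$ of $v$ can lie in $(j_0, j_s)$ either. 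These two facts verify the hypothesis of Lemma~\ref{lm:bijection}(b), and the proof is complete.
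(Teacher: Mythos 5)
Your proof is correct and follows the paper's route exactly: the paper derives this lemma from Lemma~\ref{lm:bijection} via the same toggle bijection on the terminal/non-terminal status of site $j_0$ in the last queue, merely remarking that the two sets of pairs $(\mbf p,\mbf q)$ coincide. The one step you spell out that the paper leaves entirely implicit---checking that the hypothesis of Lemma~\ref{lm:bijection}(b), which concerns $v=w(\mbf p)$, follows from the present hypothesis on $u=w(\mbf q)$---is a genuine point needing verification, and your argument for it (the first maximal particle of both $u$ and $v$ after $j_0$ sits at the end $j_s$ of the jumping sequence, and the hypothesis on $u$ excludes empty sites in between) is sound.
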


We now refine Lemma \ref{lm:rec2} slightly for the case of
decomposable words, for later convenience.
\begin{lemma}
\label{lm:JT} Suppose $uv$ is decomposable, that site $j_0$ in $u$ is
empty, and that there is a site $j > j_0$ in $u$ with a maximal particle
such that there are no empty sites or maximal particles
strictly between $j_0$ and $j$ in $u$.

	Then
	\[ [uv] = \sum t_{u'_{j_0}}[u'v],
	\] where the sum extends over all $u'$ such that $u'^{j_0\ra}
= u$.  Moreover, the words $u'v$ occurring on the right hand side
are decomposable and simpler
than $uv$.
\end{lemma}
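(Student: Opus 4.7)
The plan is to deduce this lemma directly from Lemma \ref{lm:rec2} applied to the word $uv$ at position $j_0$, and then to identify the summation set explicitly and verify decomposability and simplicity of the words that appear on the right-hand side.

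First I would check that the hypothesis of Lemma \ref{lm:rec2} is satisfied. Since $uv$ is decomposable and $u$ contains the maximal particle $r$ at position $j$, no entry of $v$ can equal $r$ (and in fact $v$ has no $\infty$ either). Going cyclically right from $j_0$ in $uv$ we remain inside $u$ until we reach position $j$, and by the assumption on $u$ no empty site is encountered before $j$; hence the particle at $j$ is the first maximal particle right of $j_0$ and it comes before any additional empty site. Lemma \ref{lm:rec2} therefore gives
\[
[uv] = \sum_{x:\, x^{j_0\ra}=uv} t_{x_{j_0}}\,[x].
\]

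Next I would show that every $x$ in this sum has the form $u'v$ with $u'^{j_0\ra}=u$, by confining the jumping sequence $j_0,j_1,\dots,j_s$ of $x$ to the interval $[j_0,j]$ inside $u$. The key observation is that $j$ cannot itself appear in the jumping sequence: if $j=j_k$ for some $k\geq 1$, the identity $(uv)_{j_k}=x_{j_{k-1}}$ would place the value $r$ somewhere in the strictly increasing chain $x_{j_0}<x_{j_1}<\dots<x_{j_s}$, leaving no room for a larger term above. Since $x_j=(uv)_j=r$, the maximality of $r$ then rules out any consecutive pair $j_{k-1},j_k$ straddling $j$ (otherwise $j$ would be the preferred candidate at step $k$). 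Finally, if the sequence ran past $j$ cyclically, then at $j$ we would find $r>x_{j_s}$ unless $x_{j_s}=r$; in either case $j_s\leq j$ and $x_{j_s}=r$. Thus $x$ and $uv$ agree outside $u$, so $x=u'v$, and the multiset of entries of $u'$ is obtained from that of $u$ by removing one $\infty$ and inserting one $r$.

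To finish, I would verify decomposability and simplicity. For decomposability, the finite entries of $u'$ are either finite entries of $u$ or the newly inserted $r$; since every entry of $v$ is a finite value strictly less than all finite entries of $u$ and in particular strictly less than $r$, every entry of $u'$ is strictly greater than every entry of $v$. For the simpler-than ordering, the type of $u'v$ is $(m_1,\dots,m_{r-1},m_r+1)$, which is strictly simpler than the type $(m_1,\dots,m_r)$ of $uv$ because it has the same number of parts and a strictly larger last part. The main obstacle is the combinatorial bookkeeping in the middle step—confining the jumping sequence to $[j_0,j]$ and pinning down $x_{j_s}=r$—since once this is established, both the identification of the sum and the remaining assertions follow immediately.
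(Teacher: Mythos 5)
Your proof is correct and follows essentially the same route as the paper: apply Lemma~\ref{lm:rec2} to $uv$ at $j_0$, show the jumping sequence of any preimage cannot reach or pass the maximal particle at $j$ (so every preimage has the form $u'v$), and deduce decomposability and simplicity from the type change $(m_1,\dots,m_r)\mapsto(m_1,\dots,m_r+1)$. You merely spell out the confinement argument and the verification of the hypothesis of Lemma~\ref{lm:rec2} in more detail than the paper's very terse proof does.
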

\begin{proof} We need to prove that a word $w$ satisfies
$w^{j_0\ra}=uv$ if and only if $w$ is a decomposable word of the form
$w=u'v$ where $u'^{j_0\ra}=u$.
This is clear since whenever $w^{j_0\ra}=u$, the associated
jump sequence necessarily ends before or at $j$.
Since $u'_i \geq u_i$ for each $i$, it follows that $u'v$ is
decomposable.  Each $u'v$ is simpler than $uv$ since it has simpler
type than $uv$.
\end{proof}

\section{Finishing the proof}
\label{sec:proof}
\noindent

\begin{proof}[Proof of Theorem \ref{th:main}]

Fix $u_0, v_0, v'_0$ such that $u_0v_0$ is decomposable and $v'_0$ has
the same type as $v_0$.  By Theorem~\ref{th:mlq} we can work with brackets
instead of probabilities, and we want to prove that
$\frac{[u_0v_0]}{[u_0v'_0]} = \frac{[\infty v_0]}{[\infty v'_0]}$.

In the course of the proof we will consider decomposable words $uv$ such that $v$
has the same type as $v_0$ and $uv$ is simpler
than $u_0v_0$.

Our main goal then will be to derive, for each such $uv$, an
identity of the form
\begin{equation} \label{ppp} [uv] = f(u) [\infty v],
\end{equation} where $f(u)$ is some polynomial in the $t_i$'s. The exact form
of $f(u)$ is not important, only the fact that it depends only on $u$
and not on $v$.  To see that this proves the theorem, note that
$[u_0v'_0] = f(u_0) [\infty v'_0]$, which implies the desired
conclusion $\frac{[u_0v_0]}{[\infty v_0]} = f(u_0) =
\frac{[u_0v'_0]}{[\infty v'_0]}$, or
$\frac{[u_0v_0]}{[u_0v'_0]}=\frac{[\infty v]}{[\infty v'_0]}$.

To prove (\ref{ppp}), we will for each $uv$ prove an identity of the
form
\begin{equation}
	\label{ppprec} [uv] = c_1[u_1'v] + c_2[u_2'v] + \dotsb,
	\end{equation} where all the $u'_i$ are words simpler than
$u$. The coefficients $c_i$ are polynomials in the $t_i$'s
(actually, for the most they will be simply $1$) and they are independent of $v$;
recall that we only
consider $v$ of same type as the fixed word $v_0$.

Note that there is a unique simplest word of the form $uv_0$ among those simpler than
$u_0v_0$, namely $\infty v_0$. Hence, by applying (\ref{ppprec}) recursively
we obtain an identity of the desired form (\ref{ppp}).

So it remains to state and prove an identity of the form
(\ref{ppprec}) for each $uv$.  The identity will come in two forms,
depending on the exact structure of $u$.  They are given exactly by
Lemmas \ref{lm:ET} and \ref{lm:JT}; for a given $u$ exactly one of
these apply!

\end{proof}

\subsubsection*{Example}

Let $v$ be any word of the same type as $122$. We give an example of the
argument in the proof of the theorem for the decomposable word
$534v$.

First we use Lemmas \ref{lm:ET} and \ref{lm:JT} to generate a set of
identities where the words on the right hand side are simpler than the
word on the left hand side.  The identities with a term involving
$Z_{\mbf{m}}/Z_{\mbf{m}^-}$ are generated by Lemma~\ref{lm:ET}, the
others from Lemma~\ref{lm:JT}.  (Here we write $r+1$ instead of
$\infty$ for words of type $\mbf{m} = (m_1,\dots,m_r)$;
see Remark~\ref{re:trivial}.)

$ [534v] = t_4[434v] + t_3[344v] $

$ [434v] = t_3[334v] $

$ [344v] = \frac{Z_{1,2,1}}{Z_{1,2}}[333v] - [434v] - [443v] $

$ [334v] = \frac{Z_{1,2,2}}{Z_{1,2}}[333v] - [433v] - [343v] $

$ [434v] = t_3[334v] $

$ [443v] = t_3[433v] = t_3^2[333v] $

$ [433v] = t_3[333v] $

$ [343v] = t_3[333v] $

Starting from the first equation and then repeatedly subsituting to
express $[534v]$ in yet simpler words we get $[534v] =
(\frac{Z_{1,2,2}}{Z_{1,2}}(t_3t_4-t_3^2)+\frac{Z_{1,2,1}}{Z_{1,2}}t_3-2t_3^2t_4-t_3^2-2t_3^3)[333v]$.
Since the parenthesized expression depends only on the type of $v$ we get for
example that $[534212] / [333212] = [534221] / [333221]$.

\section{Explicit formulas}
\label{sec:explicit}

In this section, we make the relations given in Lemmas \ref{lm:ET} and
\ref{lm:JT} more explicit.

\begin{lemma}\label{lm:Zforsinglequeue} Let $u$ be any word of type
$\mbf m=(m_1,\dotsc,m_{i-1})$ and let $m_i$ be a positive integer.
The sum of the weights with respect to $u$ of all queues of capacity
$c=m_1+\dotsb+m_i$ is given by
\begin{equation}\label{eq:Zforsinglequeue} h_{n-c}
(\underbrace{t_1,\dotsc,t_1}_{m_1},
\underbrace{t_2,\dotsc,t_2}_{m_2},\dotsc,
\underbrace{t_{i-1},\dotsc,t_{i-1}}_{m_{i-1}},
\underbrace{t_i,\dotsc,t_i}_{m_i+1}),
\end{equation} where $h_k$ denotes the complete homogeneous symmetric
polynomial of degree $k$.
\end{lemma}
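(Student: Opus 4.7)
The plan is to reduce to a sorted input and evaluate directly. Write $F(u) := \sum_q \mathrm{wt}(q, u)$. The main obstacle is to show that $F(u)$ depends only on the type of $u$, not on its arrangement. The naive local bijection --- simultaneously swap $u_j \leftrightarrow u_{j+1}$ together with the terminal status of positions $j$ and $j+1$ --- preserves weight in several cases but not in general, as direct computation on small examples shows. A correct proof of invariance would presumably construct a more subtle global bijection tracking how particle trajectories adjust under the swap. Granting invariance, $F(u)$ depends only on the type $(m_1,\dotsc,m_{i-1})$.

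Now take $u = 1^{m_1} 2^{m_2} \cdots (i-1)^{m_{i-1}} \infty^{m_\infty}$ with $m_\infty = n - \sigma_{i-1}$, where $\sigma_k := m_1 + \cdots + m_k$. After converting $\infty$ to $i$, the particles are sorted in increasing order of size from left to right. A short induction on the processing order (smallest sizes first) shows that the particle initially at position $p$ settles at $p_p$, the $p$-th terminal from the left, whenever $p \le c$, and is discarded when $p > c$; this uses the pigeonhole bound $p_p \ge p$. Consequently, a particle at position $p \le c$ visits exactly the sites in $[p, p_p]$, while particles at positions $p > c$ visit every site on the ring. From these facts one checks that, for every non-terminal site $s$, the first-visitor size $k_q(s)$ depends only on $j_q(s) :=$ number of terminals strictly less than $s$: explicitly, $k_q(s) = k$ whenever $j_q(s) \in [\sigma_{k-1}, \sigma_k - 1]$ for $k < i$, and $k_q(s) = i$ whenever $j_q(s) \in [\sigma_{i-1}, c]$. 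Call this function $k^*$.

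Finally, $c$-subsets of $[n]$ (queues) correspond bijectively to weakly increasing sequences $0 \le a_1 \le \cdots \le a_{n-c} \le c$, via $q \mapsto (j_q(s_1), \dotsc, j_q(s_{n-c}))$, where $s_l$ is the $l$-th non-terminal. Under this bijection, $F(u) = \sum_{(a_l)} \prod_{l=1}^{n-c} t_{k^*(a_l)}$, which is precisely the expansion of $h_{n-c}$ in the $c+1$ variables $t_{k^*(0)}, \dotsc, t_{k^*(c)}$. By the definition of $k^*$, among these variables $t_k$ appears with multiplicity $m_k$ for $k < i$ and $t_i$ with multiplicity $m_i + 1$, yielding the formula in the lemma.
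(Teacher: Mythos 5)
Your argument has a genuine gap at its very first step: the invariance of $F(u)=\sum_q \mathrm{wt}(q,u)$ under rearrangement of $u$ is precisely the nontrivial content of the lemma (it is what makes the answer depend only on the type $\mathbf m$), and you do not prove it --- you observe that the obvious site-swapping bijection fails to preserve weight and then ``grant'' the invariance anyway. As written, your computation therefore establishes the formula only for the single sorted word $1^{m_1}2^{m_2}\dotsm(i-1)^{m_{i-1}}\infty^{m_\infty}$ of each type, not for an arbitrary $u$.

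The good news is that your sorted-case computation is correct and is essentially the paper's argument in disguise, so the reduction step is unnecessary rather than unavoidable. The paper works with an arbitrary $u$ directly: fix any admissible processing order (smaller particles first) and, instead of indexing a non-terminal site $s$ by the number of terminals to its left, index it by the rank $j$, in the processing order, of the particle that first visits $s$. One checks, much as in your settling argument, that the map sending a queue $q$ to its visiting sequence $(\beta_1,\dotsc,\beta_{c+1})$, where $\beta_j$ is the number of non-terminal sites first visited by the $j$th particle, is a bijection onto weak compositions of $n-c$ into $c+1$ nonnegative parts, and that the weight of $q$ with respect to $u$ is $t_{s_1}^{\beta_1}\dotsm t_{s_{c+1}}^{\beta_{c+1}}$, where $s_j$ is the size of the $j$th particle. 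Summing over all compositions gives $h_{n-c}(t_{s_1},\dotsc,t_{s_{c+1}})$ for every $u$ of type $\mathbf m$ at once. Your bijection with weakly increasing sequences $0\le a_1\le\dotsb\le a_{n-c}\le c$ is the same bijection in different clothing (take $\beta_j=\#\{l: a_l=j-1\}$); what breaks for unsorted $u$ is only your identification of the first visitor of $s$ via the terminal count $j_q(s)$, which must be replaced by the processing rank. With that change the detour through invariance, and hence the gap, disappears.
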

\begin{proof} In the queuing procedure defined in
Section~\ref{sec:queues}, we are free to choose any ordering of the
particles to enter the queue as long as smaller particles precede
larger ones.  Let us fix any such ordering for the particular input
word $u$ (first replacing empty sites by particles of a new maximal
size $i$).  If we are given a queue $q$ of capacity $c$, we can
define its \emph{visiting sequence}
$(\beta_1,\dotsc,\beta_{c+1})$ by letting $\beta_j$ be the
number of sites in $q$ that are first visited by the $j$th particle in
the ordering.  Conversely, if we are given any $(c+1)$-tuple
$\beta=(\beta_1,\dotsc,\beta_{c+1})$ of nonnegative integers
which add up to $n-c$, it is easy to see that there is a unique queue
$q$ with capacity $c$ and visiting sequence $\beta$.

Thus, there is a bijection between queues $q$ of capacity $c$ and
nonnegative integer sequences $(\beta_1,\dotsc,\beta_{c+1})$
adding up to $n-c$, and the weight of $q$ with respect to $u$ is given
by $t_{s_1}^{\beta_1}\dotsm
t_{s_{c+1}}^{\beta_{c+1}}$, where $s_j$ is the size of the $j$th
particle in the ordering.  We conclude that the sum of the weights
with respect to $u$ of all queues with capacity $c$ is
\begin{align*} \sum_{\mathclap{
\substack{(\beta_1,\dotsc,\beta_{c+1})\in\N^{c+1}\\
\beta_1+\dotsc\beta_{c+1}=n-c}}}
t_{s_1}^{\beta_1}\dotsm t_{s_{c+1}}^{\beta_{c+1}}
&=h_{n-c}(t_{s_1},\dotsc,t_{s_{c+1}})\\
&=h_{n-c}
(\underbrace{t_1,\dotsc,t_1}_{m_1},
\underbrace{t_2,\dotsc,t_2}_{m_2},\dotsc,
\underbrace{t_{i-1},\dotsc,t_{i-1}}_{m_{i-1}},
\underbrace{t_i,\dotsc,t_i}_{m_i+1}).
\end{align*}
\end{proof}

We can compute $Z_{\mbf m}$, the sum of the weights of all multi-line
queues of type $\mbf m$, by applying Lemma~\ref{lm:Zforsinglequeue}
multiple times.
\begin{theo}\label{th:Z} The sum of the weights of all multi-line
queues of type $\mbf m=(m_1,\dotsc,m_r)$ is given by
\[ Z_{\mbf m}(t_1,\dotsc,t_r)=\prod_{i=1}^r
h_{n-m_1-m_2-\dotsb-m_i} (\underbrace{t_1,\dotsc,t_1}_{m_1},
\underbrace{t_2,\dotsc,t_2}_{m_2},\dotsc,
\underbrace{t_{i-1},\dotsc,t_{i-1}}_{m_{i-1}},
\underbrace{t_i,\dotsc,t_i}_{m_i+1}).
\]
\end{theo}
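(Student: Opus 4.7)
The plan is to prove Theorem~\ref{th:Z} by a straightforward induction on $r$, peeling one queue off the multi-line queue at a time and invoking Lemma~\ref{lm:Zforsinglequeue} at each step. The crucial conceptual point is this: by definition of the weight,
\[
Z_{\mbf m} \;=\; \sum_{q_1,\dotsc,q_r} \prod_{i=1}^r \mathrm{wt}\bigl(q_i;\,(q_{i-1}\circ\dotsb\circ q_1)(\infty)\bigr),
\]
where the sum is over all choices of queues $q_i$ of capacity $m_1+\dotsb+m_i$. Although the weight of $q_r$ depends on the particular input it receives from $q_{r-1}$, Lemma~\ref{lm:Zforsinglequeue} tells us that the \emph{sum} $\sum_{q_r} \mathrm{wt}(q_r;u)$ over all queues $q_r$ of capacity $m_1+\dotsb+m_r$ depends only on the \emph{type} of $u$, not on $u$ itself. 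Since the output of any queue $q_{r-1}$ of capacity $m_1+\dotsb+m_{r-1}$ fed with a word of type $(m_1,\dotsc,m_{r-2})$ is automatically of type $(m_1,\dotsc,m_{r-1})$, the innermost sum over $q_r$ factors out cleanly.

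Concretely, I would write
\[
Z_{\mbf m} \;=\; \sum_{q_1,\dotsc,q_{r-1}}\!\!\Biggl(\prod_{i=1}^{r-1}\mathrm{wt}\bigl(q_i;\,(q_{i-1}\circ\dotsb\circ q_1)(\infty)\bigr)\Biggr) \cdot \sum_{q_r}\mathrm{wt}\bigl(q_r;\,(q_{r-1}\circ\dotsb\circ q_1)(\infty)\bigr),
\]
observe that by Lemma~\ref{lm:Zforsinglequeue} (applied with $i=r$) the inner sum equals
\[
h_{n-m_1-\dotsb-m_r}\bigl(\underbrace{t_1,\dotsc,t_1}_{m_1},\dotsc,\underbrace{t_{r-1},\dotsc,t_{r-1}}_{m_{r-1}},\underbrace{t_r,\dotsc,t_r}_{m_r+1}\bigr)
\]
independently of the choice of $q_1,\dotsc,q_{r-1}$, and pull it out. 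What remains is precisely $Z_{\mbf m^-}$ where $\mbf m^- = (m_1,\dotsc,m_{r-1})$, giving the recursion
\[
Z_{\mbf m} \;=\; Z_{\mbf m^-}\cdot h_{n-m_1-\dotsb-m_r}\bigl(\underbrace{t_1,\dotsc,t_1}_{m_1},\dotsc,\underbrace{t_r,\dotsc,t_r}_{m_r+1}\bigr).
\]

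Iterating this recursion (with base case $r=0$, for which $Z_{()} = 1$, matching the empty product) yields the claimed product formula. The main (and only) obstacle is the one addressed above: verifying that the sum over the outermost queue really is independent of the lower queues so that the factorization is legitimate; but this is exactly the content of Lemma~\ref{lm:Zforsinglequeue}, so the proof reduces to a clean induction with essentially no calculation.
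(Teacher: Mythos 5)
Your proposal is correct and is exactly the argument the paper intends: the paper's entire proof of Theorem~\ref{th:Z} is the remark that one applies Lemma~\ref{lm:Zforsinglequeue} repeatedly, and your write-up just makes explicit the key point that the inner sum depends only on the type of the input word and therefore factors out. Nothing to add.
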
 Theorem~\ref{th:Z} appeared first in \cite{aas}.

Now we can give an explicit formula for the quantity $Z_{\mbf{m}} /
Z_{\mbf{m}^-}$ occurring in Lemma \ref{lm:ET}.

\begin{coro}
\label{co:explicitZ} Let $\mbf{m} = (m_1,\dots,m_r)$ be a type, and
$e = n - m_1 - \dots - m_r$ be the number of empty sites. Then
\[ \frac{Z_{\mbf{m}}}{Z_{\mbf{m}^-}} = h_e
(\underbrace{t_1,\dotsc,t_1}_{m_1},
\underbrace{t_2,\dotsc,t_2}_{m_2},\dotsc,
\underbrace{t_{r-1},\dotsc,t_{r-1}}_{m_{r-1}},
\underbrace{t_r,\dotsc,t_r}_{m_r+1}).
\]
\end{coro}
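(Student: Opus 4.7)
The plan is to simply apply Theorem~\ref{th:Z} to both the numerator $Z_{\mbf m}$ and the denominator $Z_{\mbf m^-}$, and observe that almost everything cancels.

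Write $\mbf m=(m_1,\dotsc,m_r)$ and $\mbf m^-=(m_1,\dotsc,m_{r-1})$. Theorem~\ref{th:Z} expresses $Z_{\mbf m}$ as a product of $r$ factors, one for each $i=1,\dotsc,r$, and expresses $Z_{\mbf m^-}$ as the analogous product of $r-1$ factors indexed by $i=1,\dotsc,r-1$. Crucially, the $i$th factor in each product depends only on $n$ and on $(m_1,\dotsc,m_i)$, not on the later parts of the type. Since $n$ is fixed and $\mbf m$ and $\mbf m^-$ agree in their first $r-1$ entries, the first $r-1$ factors coincide between the two products. Cancelling them leaves only the $i=r$ factor of $Z_{\mbf m}$,
\[
\frac{Z_{\mbf m}}{Z_{\mbf m^-}}
= h_{n-m_1-\dotsb-m_r}
\bigl(\underbrace{t_1,\dotsc,t_1}_{m_1},
\underbrace{t_2,\dotsc,t_2}_{m_2},\dotsc,
\underbrace{t_{r-1},\dotsc,t_{r-1}}_{m_{r-1}},
\underbrace{t_r,\dotsc,t_r}_{m_r+1}\bigr),
\]
which is the claimed formula after substituting $e=n-m_1-\dotsb-m_r$.

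There is essentially no obstacle: the only thing to verify is that the factors in Theorem~\ref{th:Z} line up correctly, i.e.\ that $n$ is the same in both products (which is true because we fixed the length of the word at the outset) and that the formula for the $i$th factor only involves $m_1,\dotsc,m_i$. Both are immediate from the statement of Theorem~\ref{th:Z}, so the corollary is a direct telescoping.
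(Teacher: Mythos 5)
Your proof is correct and is exactly the (implicit) argument the paper intends: the corollary is stated immediately after Theorem~\ref{th:Z} with no separate proof, precisely because the quotient telescopes as you describe, the first $r-1$ factors being common to $Z_{\mbf m}$ and $Z_{\mbf m^-}$ since each factor depends only on $n$ and the initial segment $(m_1,\dotsc,m_i)$ of the type.
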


Finally, we show how to invert the $\ra$ operation, that is,
for a fixed $u$ with $u_1=\infty$, finding all $v$ such that
$v^{1\ra} = u$.

\begin{defi} Suppose $w$ is a word where site $1$ is empty.  For a
subset of sites $J = \{j_1 < \dots < j_s\}$ with $w_j \neq \infty$ for
all $j\in J$, define a word $v = w^{\la J}$ by letting $v_1 =
w_{j_1}$, and $v_{j_k} = w_{j_{k+1}}$ for $0 \leq k < s$.  All other
entries of $v$ and $w$ are the same.
\end{defi}

Note that if $u = v^{1\ra}$ then $v = u^{\la J}$ for \emph{some} $J$
(namely the jump sequence considered as a set).

It is easy to characterize the possible $J$. Essentially, we just have to
check how the inequalities of Definition~\ref{de:jump} among the jumping entries
of $v$ transform into inequalities among entries of $v^{1\ra}$. The result is
given in the following lemma which we present without a formal proof.
\begin{lemma}
\label{lm:laexplicit} Let $u,v$ be two words and suppose $v_1=\infty$.
Then $u^{1\ra} = v$ if
and only if $u = v^{\la J}$ for some $J = \{j_1<\dots<j_s\}$
satisfying the following properties
	\begin{itemize}
		\item $v_{j} \neq \infty$ for each $j\in J$,
		\item for $1 \leq k \le s$, the entry $v_{j_k}$ is
larger than or equal to each preceding finite entry, and
		\item $v_{j_1} < \dots < v_{j_s}<r$, where
$r$ is the maximal particle size in $v$.
	\end{itemize}
\end{lemma}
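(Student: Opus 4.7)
The proof is essentially a direct, position-by-position verification that the $\la$ operation with a suitable $J$ inverts the $\ra$ operation, and that the three bullet conditions single out exactly those $J$'s for which this inversion is consistent. In both directions one compares the explicit shift formulas $v_{j_k}=u_{j_{k-1}}$ (from Definition~\ref{de:jump}) against $v_1=w_{j_1}$, $v_{j_k}=w_{j_{k+1}}$ (from the definition of $\la$), position by position along the jumping sequence of $u$.

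For the forward direction, assume $u^{1\ra}=v$ and take $J$ to be (essentially) the jumping sequence $\{j_1,\dots,j_s\}$ of $u$ from $j_0=1$, with a small adjustment at the tail to record in $v$ the maximal value $u_{j_s}$ that $\ra$ otherwise destroys. The identity $u=v^{\la J}$ then comes from reading the $\ra$-formula in reverse at each position: the shifted entries $v_{j_k}=u_{j_{k-1}}$ become $u_{j_{k-1}}=v_{j_k}$, and every other position is unchanged by both operations. The three conditions now follow from the defining properties of the jumping sequence: each $u_{j_{k-1}}$ is finite, so $v_{j_k}$ is finite, giving condition~1; the "first position cyclically to the right" clause in Definition~\ref{de:jump} forces every finite entry strictly between $j_{k-1}$ and $j_k$ to be $\le u_{j_{k-1}}=v_{j_k}$, and these intermediate positions are identical in $u$ and $v$, giving condition~2; and the strict chain $u_{j_0}<u_{j_1}<\cdots$ translates into $v_{j_1}<\cdots<v_{j_s}$, with the terminal constraint encoding the fact that no larger particle lies cyclically to the right of $j_s$.

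For the reverse direction, assume $J$ satisfies the three conditions and put $u=v^{\la J}$. We compute the jumping sequence of $u$ from $1$ step by step and show it runs exactly through the positions of $J$: condition~2 forbids any earlier jump (every finite entry strictly between $j_{k-1}$ and $j_k$ is $\le v_{j_k}=u_{j_{k-1}}$), condition~3 gives the strict inequality $u_{j_{k-1}}<u_{j_k}$ needed to advance, and its terminal part halts the sequence at the correct index. Once the jumping sequence of $u$ has been identified, applying $\ra$ literally reverses the shifts of $\la$ at the same positions, so $u^{1\ra}=v$. The main technical obstacle is the dual role of the terminal index $j_s$: unlike the interior indices it is \emph{not} shifted by $\la$ and instead records in $v$ the location of the maximal value that $\ra$ discards. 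Keeping this "shifted versus fixed" distinction straight—together with the corresponding case split when $s=0$—is the only non-routine aspect of the bookkeeping.
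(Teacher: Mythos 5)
The paper offers no proof of this lemma at all (it is explicitly ``presented without a formal proof''), so your argument has to stand entirely on its own. Your overall strategy is the right one---and essentially the only one: identify the jumping sequence of $u$ from $j_0=1$, match the shift formulas of $\ra$ and $\la$ position by position, and in the reverse direction show that the three bullet conditions force the jumping sequence of $v^{\la J}$ to run exactly through $J$. Your treatment of the first two bullets is fine once the chaining of the inequalities over consecutive gaps $(j_{k-1},j_k)$ is written out.

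The gap is precisely at the point you yourself flag as ``the only non-routine aspect'': the terminal index. The two properties you ascribe to the tail of $J$ are mutually exclusive. If $J$ is literally the jumping sequence $\{j_1<\dots<j_s\}$ of $u$, then conditions (1)--(3) do hold (indeed $v_{j_s}=u_{j_{s-1}}<u_{j_s}=r$), but $v^{\la J}\ne u$ under the paper's definition of $\la$, which leaves $v_{j_s}$ in place: one gets $(v^{\la J})_{j_s}=v_{j_s}=u_{j_{s-1}}$, whereas the true $u_{j_s}$ is the maximal value $r$ that $\ra$ discarded. If instead you append to $J$ a position $p$ with $v_p=r$ so that the backward shift restores the discarded value---which is what your phrase ``records in $v$ the location of the maximal value'' describes---then $v^{\la J}=u$, but the final inequality $v_{j_s}<r$ of condition (3) is violated. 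Concretely, for $v=\infty3412\infty31$ the set $J=\{2\}$ satisfies all three conditions, yet the literal $v^{\la\{2\}}=33412\infty31$ has $(33412\infty31)^{1\ra}=\infty3312\infty31\ne v$; the actual preimages are $v^{\la\{3\}}=43412\infty31$ and $v^{\la\{2,3\}}=34412\infty31$, whose index sets end at a position carrying the maximal particle $4$. So the statement is in fact false as a combination of the printed conditions with the printed definition of $\la$; a correct proof must first repair one of them---either redefine $\la$ so that the last index of $J$ receives the value $r$ (keeping $J$ equal to the jumping sequence, which is what the counting remark $\prod_i(1+a_i)$ after the lemma presupposes), or keep $\la$ as printed, lengthen $J$ by one, and weaken the last inequality to $v_{j_s}=r$. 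The degenerate case $s=0$ has the same defect: the literal $v^{\la\emptyset}=v$ still has an empty first site, so it is not a preimage at all unless $v_1$ is replaced by $r$. Your sketch senses this difficulty but does not resolve it, and resolving it is the entire content of the lemma.
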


Another way of saying this is the following.
Let $W$ be the set of positions of non-maximal entries in $v$ that are
greater than or equal to all preceding finite entries.
The sets $J$ described in Lemma \ref{lm:laexplicit} are precisely those
subsets of $W$ such that all $v_j$, $j\in J$ are
distinct. In particular, if the $a_i$ denotes the number of $k$ such
that $v_{j_k} = i$, then the number of possible $J$ is $\prod_i
(1+a_i)$.

\section{Open questions and a conjecture}
\label{sec:future}
Our independence result, Corollary~\ref{co:firstmaincoro},
is a natural probabilistic statement; in fact, as we discussed earlier,
one is easily mislead to believe that it is obvious. However, our proof
is far less natural as it depends on the theory of multi-line queues
and makes use of indirect recurrence relations. It would be desirable
to have a more probabilistic proof not involving multi-line queues but
focusing on the Markov process itself.

As we saw from the inequality~\eqref{eq:neq},
when the large and small entries
are not separated, their orders are correlated. It would be interesting
to quantify this correlation and bound it.
In general, given the positions and the order of the small entries in
a random word sampled from $\pi$, what can be said about the large ones?
As a possible partial answer to that question,
we offer the following conjecture.
\begin{conj}
Let $\mbf m=(1,1,\dotsc,1)$ be the type of the permutation word $12\dotsb n$
and let $W$ be a random permutation sampled from the stationary distribution on
$\Omega_{\mbf m}$. Conditioned on the event that the $k$ smallest entries
in $W$ are in some predefined positions in some predefined order, the
most probable order among the $n-k$ largest entries is cyclically ascending
and their least probable order is cyclically descending.
\end{conj}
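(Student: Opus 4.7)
My plan is to translate the conjecture via Theorem \ref{th:mlq} into a statement about the multi-line queue weights $[w]$, and then prove it by constructing weight-monotone injections between sets of multi-line queues. Since $\mbf m=(1,\dotsc,1)$, each $[w]$ is a polynomial in $t_1,\dotsc,t_n$ with nonnegative integer coefficients enumerating multi-line queues with output $w$. If I can show that $[w_{\text{asc}}]$ dominates $[w]$ coefficientwise (and that $[w]$ dominates $[w_{\text{desc}}]$) for every admissible $w$, the probability comparison follows uniformly in the $t_i$, and is consistent with the invariance of $\pi$ under cyclic shifts that forces all $n-k$ cyclically ascending arrangements to be tied.

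The central step would be a \textbf{local exchange lemma}: suppose two large entries $a>b$ occupy free positions that are cyclically adjacent among the free positions (though possibly separated in the linear word by several small entries); then swapping them weakly increases the weight $[w]$, provided the swap moves the configuration closer to a cyclic rotation of $(k+1,\dotsc,n)$. I would try to prove this by constructing an injection that sends each multi-line queue $\mbf q$ with output $w$ to a multi-line queue $\mbf q'$ with output $w'$ (the swapped word) and with $[\mbf q']\ge[\mbf q]$. The natural candidate is to identify the topmost queue at which one of the two swapped entries first settles in its terminal site, modify the terminal positions at and above that level, and propagate the change in a way that preserves the positions of the small entries in the output. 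Given such an exchange lemma, a cyclic bubble-sort argument that moves every arrangement toward the nearest cyclically ascending one would establish $[w]\le[w_{\text{asc}}]$, and the mirror argument would give $[w]\ge[w_{\text{desc}}]$.

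The main obstacle is the exchange lemma itself. In Theorem \ref{th:main}, the decomposability hypothesis provided a clean split in the word that let the recurrences in Lemmas \ref{lm:ET} and \ref{lm:JT} do all the work; here the small entries are interleaved arbitrarily with the large ones, and the two cyclically adjacent large positions being swapped may be separated in the linear order by several small entries. The behaviour of the queuing procedure around these intervening small entries must be tracked carefully, and the cyclic character of the exchange is particularly delicate near the wraparound between the last and first free positions. I expect the right framework will require a non-decomposable analogue of Lemma \ref{lm:JT}, producing an identity relating $[w]$ and $[w']$ modulo contributions from strictly simpler words in the order defined before Section \ref{sec:explicit}, with correction terms that combine favourably after cancellation; constructing such a recurrence and verifying the sign of the cancellation is where the substantive combinatorial work would lie.
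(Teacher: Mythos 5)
The statement you are addressing is labelled a \emph{conjecture} in the paper, and the paper offers no proof of it --- only the remark that it has been verified by computer in the homogeneous case up to $n=10$. So there is no argument of the authors to compare yours against. More importantly, your proposal is not a proof either: you say yourself that the central ingredient, the local exchange lemma, is an obstacle you have not overcome, and everything downstream (the cyclic bubble-sort, the coefficientwise domination) is conditional on it. As written this is a research plan, and it leaves the conjecture exactly as open as the paper does.

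Two concrete problems with the plan itself. First, your claim that cyclic-shift invariance of $\pi$ forces all $n-k$ cyclically ascending arrangements to be tied is false: a cyclic shift moves the small entries along with the large ones, so once you condition on the small entries occupying \emph{fixed} positions in a fixed order, the cyclic symmetry is broken. The paper's own example makes this explicit --- among the cyclic rotations of $5678$ placed into ${\ast}3{\ast}4{\ast}{\ast}21$, one particular rotation, $73845621$, is singled out as the most probable. This matters for your bubble-sort scheme: ``the nearest cyclically ascending arrangement'' is not a single well-defined target, and a weight-monotone path need not exist toward all of the (non-tied) ascending rotations simultaneously, so you would at minimum have to identify which rotation is extremal and orient every exchange toward it. Second, coefficientwise domination of $[w_{\mathrm{asc}}]$ over $[w]$ as polynomials in $t_1,\dotsc,t_n$ is a substantially stronger statement than the conjecture, which the authors only report checking for $t_1=\dotsb=t_n$; you offer no evidence for the inhomogeneous strengthening, and a counterexample to it would sink the injection strategy without refuting the conjecture. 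Until the exchange lemma is actually stated precisely and proved --- including the wraparound case you flag --- there is no proof here.
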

For instance, let us say that
I draw a random permutation $W$, tells you that
it happens to be of the form $W={\ast}3{\ast}4{\ast}{\ast}21$ (where the stars are placeholders
for larger entries) and ask you what you think about the order of the
entries $5$ to $8$. Your best guess would be to replace the stars with
a cyclic shift of $5678$, in fact the most probable among those guesses
are $73845621$. Your worst possible guess would be $63548721$.

The conjecture has been checked in the homogeneous case
($t_1=\dotsb=t_r$) by a computer up to $n=10$.



\end{document}